\theoremstyle{theorem}
\newtheorem{theorem}{Theorem}
\newtheorem{proposition}{Proposition}
\newtheorem{lemma}{Lemma}
\theoremstyle{definition}
\newtheorem*{remark}{Remark}
\newcommand{\Z}{\mathbb{Z} }
\newcommand{\Mod}[1]{\ (\mathrm{mod}\ #1)}
\begin{document}

\title{Sliding Block Puzzles with a Twist:  On Segerman's 15+4 Puzzle}
\markright{Sliding Block Puzzles with a Twist}
\author{Patrick Garcia, Angela Hanson, David Jensen, and Noah Owen}

\maketitle

\begin{abstract}
Segerman's 15+4 puzzle is a hinged version of the classic 15-puzzle, in which the tiles rotate as they slide around.  In 1974, Wilson classified the groups of solutions to sliding block puzzles.  We generalize Wilson's result to puzzles like the 15+4 puzzle, where the tiles can rotate, and the sets of solutions are subgroups of the generalized symmetric groups.  Aside from two exceptional cases, we see that the group of solutions to such a puzzle is always either the entire generalized symmetric group or one of two special subgroups of index two.
\end{abstract}

\section{Introduction}
\label{Sec:Intro}

\subsection{Sliding Block Puzzles}
The classic 15-puzzle, pictured in Figure~\ref{Fig:15Puzz}, is a $4 \times 4$ grid containing 15 tiles, labeled with the numbers 1 through 15, and one empty square.  You can move the tiles around by sliding an adjacent tile into the empty square, and the goal is to arrange the tiles in order from 1 to 15 via a sequence of these moves.  A natural question is: given a starting permutation of the tiles, is it always possible to solve the puzzle?  If not, which permutations are solvable?  In 1879, Johnson and Story showed that the 15-puzzle cannot be solved if the starting permutation is odd \cite{JohnsonStory}.  Thus, for example, if you were to pop out two of the tiles, switch their places, and pop them back in again, the resulting puzzle would be impossible to solve.

\begin{figure}[h]
\begin{center}
\begin{ytableau}
1 & 2 & 3 & 4 \\
5 & 6  & 7 & 8\\
9 & 10 & 11 & 12 \\
13 & 14 & 15 & {}\\
\end{ytableau}

\caption{The 15-puzzle}
\label{Fig:15Puzz}
\end{center}
\end{figure}

In 1974, Wilson considered more general sliding block puzzles on arbitrary graphs, where the vertices represent the possible positions of the tiles and there is an edge between two vertices if the corresponding tiles are adjacent \cite{Wilson}.  If the graph has $n+1$ vertices, then the set of solvable permutations in which the empty vertex is fixed forms a subgroup of the symmetric group $S_n$.  Wilson's main result is a classification of these subgroups.

\begin{theorem} \cite[Theorem~2]{Wilson}
\label{Thm:Wilson}
Let $\Gamma$ be a simple, 2-vertex connected graph on $n+1$ vertices that is not a cycle.  Then the group of solvable permutations is:
\begin{itemize}
\item  $A_n$ if $\Gamma$ is bipartite,
\item $\mathrm{PGL} (2,5)$ if $\Gamma$ is the graph $\Theta_7$ pictured in Figure~\ref{Fig:Theta7}, and
\item $S_n$ otherwise.
\end{itemize}
\end{theorem}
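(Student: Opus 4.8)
The plan is to split the statement into a parity computation and a generation statement: show that the puzzle group $G\le S_n$ always contains $A_n$ except when $\Gamma=\Theta_7$, and separately determine which coset structure occurs. I would begin with two preliminary reductions. First, $G$ is generated by its ``cycle rotations'': if $C$ is a cycle of $\Gamma$ of length $k$, then walking the blank from its home vertex to $C$ along a path, once around $C$, and back again induces (up to that conjugation) a $(k-1)$-cycle on the tiles occupying $C$, and every element of $G$ is a product of such moves, since a closed blank-walk decomposes into simple cycles. Second, the parity is forced by $\Gamma$: a single legal move is a transposition of the blank with an adjacent tile, so an element of $G$ arising from a closed blank-walk of length $\ell$ is a product of $\ell$ transpositions fixing the blank, hence an element of $S_n$ of sign $(-1)^\ell$. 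If $\Gamma$ is bipartite every closed walk has even length, so $G\le A_n$; if $\Gamma$ is not bipartite it has an odd cycle, whose cycle rotation is an odd permutation, so knowing $A_n\le G$ will force $G=S_n$. Thus everything reduces to producing $A_n$ inside $G$ in the non-$\Theta_7$ cases, and to identifying $G$ with $\mathrm{PGL}(2,5)$ in the remaining case.

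The heart of the argument is to manufacture a single $3$-cycle in $G$. Since $\Gamma$ is $2$-connected and not a cycle it has minimum degree $2$ and a vertex of degree at least $3$, hence contains a theta subgraph $\Theta$: two vertices joined by three internally disjoint paths of lengths $a\le b\le c$, equivalently two cycles $C_1,C_2$ meeting along a common subpath. Composing the rotation of $C_1$ with a suitably phased inverse rotation of $C_2$ produces an element of $G$ supported in $C_1\cup C_2$ that moves only a handful of tiles; I would then carry out a short case analysis on $(a,b,c)$ showing that this can always be whittled down to a $3$-cycle, \emph{unless} $\Theta$ is the exceptional graph $\Theta_7$. That case genuinely is different: in the action of $\mathrm{PGL}(2,5)$ on the six points of $\mathbb{P}^1(\mathbb{F}_5)$ a nonidentity M\"obius transformation fixes at most two points, so there is no $3$-cycle, and one verifies directly --- by writing the cycle rotations of $\Theta_7$ as permutations of the six tiles and computing the subgroup they generate --- that for $\Gamma=\Theta_7$ one has $G=\mathrm{PGL}(2,5)$.

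Given one $3$-cycle, the final step is to spread it out. Conjugating by cycle rotations and by blank-transport paths, I would show that $G$ contains $3$-cycles whose supports can be slid throughout $\Gamma$ using $2$-connectivity, and in particular that $G$ is transitive, indeed primitive, on the $n$ tile positions: a nontrivial block system would have to be compatible simultaneously with the rotations attached to two cycles of differing length parities, which $2$-connectivity rules out. Jordan's theorem --- a primitive subgroup of $S_n$ containing a $3$-cycle is $A_n$ or $S_n$ --- then yields $A_n\le G$. Combined with the parity computation this gives $G=A_n$ when $\Gamma$ is bipartite and $G=S_n$ when $\Gamma$ is non-bipartite (via $A_n\le G$ together with an odd element), completing all three cases.

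The step I expect to be the main obstacle is the extraction of a $3$-cycle from an arbitrary theta subgraph together with the proof that $\Theta_7$ is the \emph{only} obstruction. Because $C_1$ and $C_2$ overlap only along their shared path, the product of their rotations is supported on boundedly many tiles, but exactly which permutation one lands on is sensitive to the lengths and parities of the three paths, and the balanced small configuration $\Theta_7$ is precisely the one in which every product of the two rotations stays inside a copy of $\mathrm{PGL}(2,5)$ and never reaches a $3$-cycle; pinning this down conceptually, rather than by an ad hoc enumeration, is the delicate point. Establishing primitivity of $G$ is a secondary technical issue, comparatively routine once $2$-connectivity is in hand.
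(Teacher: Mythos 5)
This theorem is not proved in the paper; it is quoted from Wilson's 1974 article, so the only meaningful comparison is with Wilson's original argument. Your outline --- parity from the length of closed blank-walks, extraction of a $3$-cycle from a theta subgraph with $\Theta_7$ as the lone exception, transitivity from $2$-connectivity, and a Jordan-type theorem to conclude $A_n\le G$ --- is indeed the shape of that proof, and your parity computation and the observation that $\mathrm{PGL}(2,5)$ contains no $3$-cycle (a M\"obius map fixing three points of $\mathbb{P}^1(\mathbb{F}_5)$ is the identity) are correct. But two load-bearing steps are broken as you state them.

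First, your primitivity argument cannot work: you propose to rule out nontrivial block systems using ``the rotations attached to two cycles of differing length parities,'' but when $\Gamma$ is bipartite \emph{every} cycle is even, so no such pair exists --- and the bipartite case is precisely one where you need $A_n\le G$. The standard repairs are either to prove $2$-transitivity directly (which is genuine work in Wilson's paper) or to bypass primitivity by showing that the supports of the conjugates of your one $3$-cycle link up all $n$ positions, which is closer to your ``sliding supports'' remark but then must carry the whole weight of the argument. Second, the product of the rotation of $C_1$ with an inverse rotation of $C_2$ is \emph{not} ``supported on boundedly many tiles'': it a priori moves everything on $C_1\cup C_2$, which may be the entire graph. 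Producing a small-support element requires commutators or a carefully phased composition, and the ``short case analysis on $(a,b,c)$'' is in fact the bulk of Wilson's proof, not a footnote to it. Relatedly, you slide between ``the theta subgraph is $\Theta_7$'' and ``$\Gamma=\Theta_7$'': a graph properly containing $\Theta_7$ still has puzzle group $A_n$ or $S_n$, so you must either locate a different theta subgraph inside it or run a separate induction (Wilson adds ears one at a time). As written, the proposal identifies the right landmarks but leaves the two hardest stretches --- primitivity and the $3$-cycle extraction --- unsupported or supported by false claims.
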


For example, the graph corresponding to the 15-puzzle is bipartite, so the set of solvable permutations is the alternating group $A_{15}$.  Here, $\mathrm{PGL} (2,5) \cong S_5$ is the ``exotic'' subgroup of $S_6$ isomorphic to $S_5$.  It acts on the vertices of $\Theta_7$, labeled as in Figure~\ref{Fig:Theta7}, by fractional linear transformations.

\begin{figure}[h]
		\begin{center}
                \begin{tikzpicture}[main/.style = {fill = black}]
    
                    \draw[main] (1,0) circle (4pt);
                    \draw[main] (-1,0) circle (4pt);
                    \draw[main] (0,0) circle (4pt);
                    \draw[main] (0.5,0.87) circle (4pt);
                    \draw[main] (-0.5, 0.87) circle (4pt);
                    \draw[main] (-0.5,-0.87) circle (4pt);
                    \draw[main] (0.5, -0.87) circle (4pt);
                    
                    \draw (-0.7,1.2) node{0};
                    \draw (0.7,1.2) node{1};
                    \draw (1.5,0) node{2};
                    \draw (0.7,-1.2) node{3};
                    \draw (-0.7,-1.2) node{4};
                    \draw (-1.5,0) node{$\infty$};
    
                    \draw (1,0) -- (0.5,0.87)  -- (-0.5, 0.87) -- (-1,0) -- (-0.5,-0.87) -- (0.5, -0.87) -- (1,0);
                    \draw (-1,0) -- (0, 0) -- (1,0);
    
            \end{tikzpicture}
            \caption{The graph $\Theta_7$}
            \label{Fig:Theta7}
            \end{center}
\end{figure}
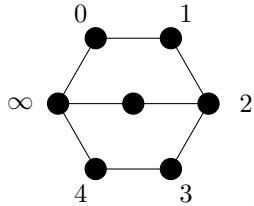

\begin{remark}
The assumption that $\Gamma$ is simple in Theorem~\ref{Thm:Wilson} is not necessary.  If a pair of vertices in $\Gamma$ have multiple edges between them, then sliding a tile along any of these edges results in the same permutation of the tiles.  Thus, if $\Gamma$ has multi-edges, the group of solvable permutations is the same as that of the simple graph obtained by replacing each multi-edge in $\Gamma$ by a single edge.
\end{remark}

\subsection{The $15+4$ Puzzle}
In 2021, Segerman created the $15+4$ puzzle, pictured in Figure~\ref{Fig:15+4}.  Much like the 15-puzzle, Segerman's puzzle consists of one empty vertex and 19 tiles, which can be moved by sliding an adjacent tile into the empty spot.  The twist is that moving the tiles around can result in some of them becoming rotated.  For example, in Figure~\ref{Fig:PermutedPuzzle}, the tiles labeled 6, 7,10, and 16 are rotated 90 degrees from their starting positions.  The goal is not only to get the tiles in the correct order, but with the correct rotations as well.  More details on the $15+4$ puzzle are provided in Dr. Segerman's YouTube video: 
\[
\mbox{\url{https://www.youtube.com/watch?v=Hc3yfuXiWe0}.}
\]

\begin{figure}[h]
\begin{center}
\includegraphics[scale=0.2]{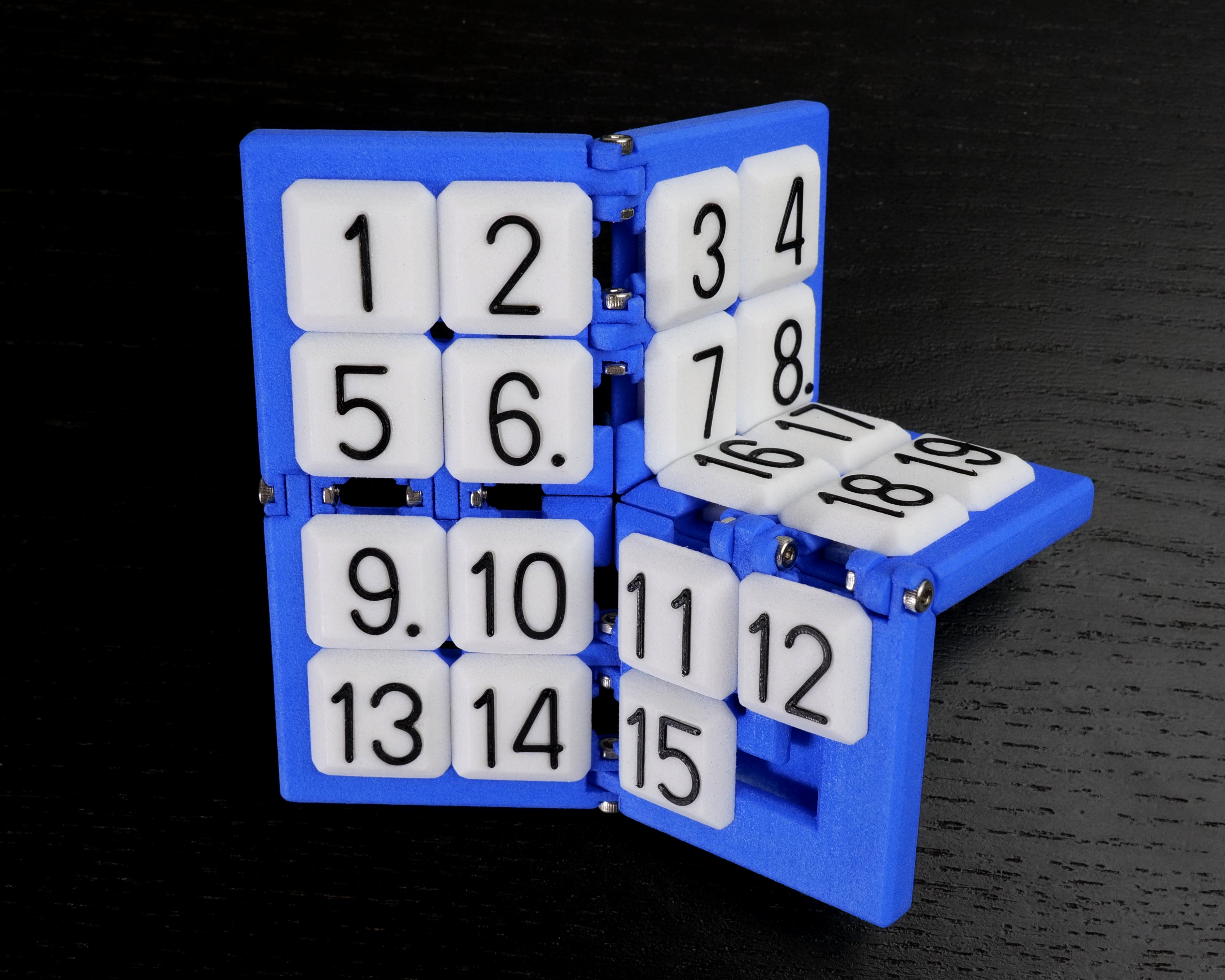}
\caption{Segerman's $15+4$ Puzzle (Photo courtesy of Henry Segerman)}
\label{Fig:15+4}
\end{center}
\end{figure}

\begin{figure}[h]
\begin{center}
\includegraphics[scale=0.27]{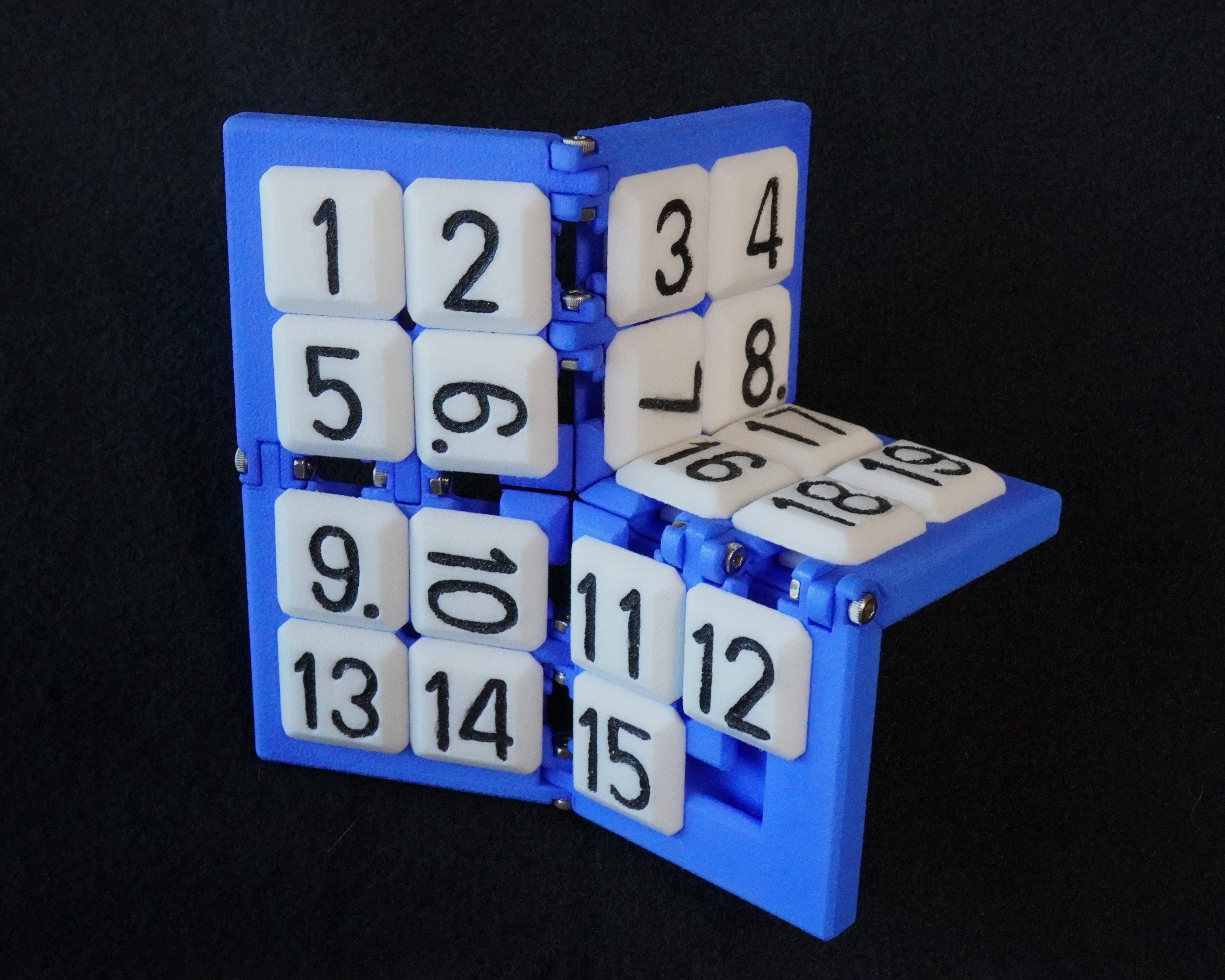}
\caption{A solvable state of the $15+4$ puzzle, in which some of the tiles are rotated}
\label{Fig:PermutedPuzzle}
\end{center}
\end{figure}

The goal of this note is to generalize Wilson's result to sliding block puzzles in which the tiles rotate as they slide around.  We assume throughout that each tile has $m$ sides, so it can rotate $m$ times.  Each position of the puzzle determines an ordered pair $(\vec{x},\sigma)$, where $\sigma \in S_n$ is a permutation and $\vec{x} \in (\Z/m\Z)^n$ records the rotations of the tiles.  As we explain in greater detail in Section~\ref{Sec:GenSym}, the set of such ordered pairs forms a subgroup of the generalized symmetric group $S(m,n)$.

Figure~\ref{Fig:Twist15+4} depicts a graphical representation of Segerman's 15+4 puzzle.  Here, each tile has 4 sides.  If a tile slides in either direction across one of the solid lines, then it does not rotate.  If it slides across one of the dashed lines from left to right, it rotates 90 degrees, and if it slides from right to left, it rotates 90 degrees in the other direction.  Because the graph is not bipartite, by Theorem~\ref{Thm:Wilson}, it is possible to obtain every permutation of the tiles from any starting position.  As we will see in Theorem~\ref{Thm:MainThm} below, however, it is not possible to obtain every possible ordered pair of permutations and rotations.  In particular, in any odd permutation of the tiles, some of the tiles will be non-trivially rotated.

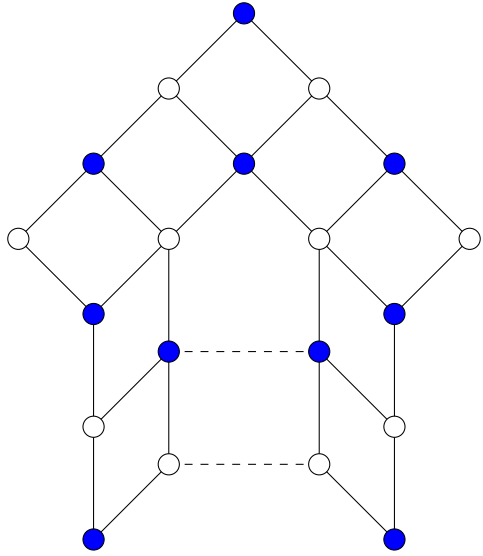
\begin{figure}[h]
		\begin{center}
                \begin{tikzpicture}[main/.style = {fill = black}]
                
                                    \draw (0,0) -- (-1,-1) -- (-2,-2) -- (-3,-3) -- (-2,-4) -- (-2,-4.5) -- (-2, -7) -- (-1, -6) -- (-1, -4.5) -- (-1, -3) -- (0,-2) -- (1,-3) -- (1, -4.5) -- (1, -6) -- (2, -7) -- (2, -5.5) -- (2, -4) -- (3,-3) -- (2,-2) -- (1, -1) -- (0,0);
    
                    \draw (-1,-1) -- (0, -2);
                    \draw (-2,-2) -- (-1,-3) -- (-2,-4);
                \draw (-2, -5.5) -- (-1, -4.5);
                \draw (-1,-4.5)[dashed] -- (1, -4.5);
    
                \draw (1,-1) -- (0,-2);
                \draw (2,-2) -- (1, -3) -- (2, -4);
                \draw (2, -5.5) -- (1, -4.5);
    
                \draw (-1,-6)[dashed] -- (1,-6);
    
                    \draw[black, fill=blue] (0,0) circle (4pt);
                    \draw[black, fill=white] (-1,-1) circle (4pt);
                    \draw[black, fill=white] (1,-1) circle (4pt);
                    \draw[black, fill=blue] (0,-2) circle (4pt);
                    \draw[black, fill=blue] (-2,-2) circle (4pt);
                    \draw[black, fill=blue] (2,-2) circle (4pt);
                    \draw[black, fill=white] (-3,-3) circle (4pt);
                    \draw[black, fill=white] (3,-3) circle (4pt);
                    \draw[black, fill=white] (-1,-3) circle (4pt);
                    \draw[black, fill=white] (1,-3) circle (4pt);
                    \draw[black, fill=blue] (-2,-4) circle (4pt);
                    \draw[black, fill=blue] (2,-4) circle (4pt);
                    \draw[black, fill=blue] (-1,-4.5) circle (4pt);
                    \draw[black, fill=blue] (1,-4.5) circle (4pt);
                    \draw[black, fill=white] (-2,-5.5) circle (4pt);
                    \draw[black, fill=white] (2,-5.5) circle (4pt);
                    \draw[black, fill=white] (-1, -6) circle (4pt);
                    \draw[black, fill=white] (1, -6) circle (4pt);
                    \draw[black, fill=blue] (2, -7) circle (4pt);
                    \draw[black, fill=blue] (-2,-7) circle (4pt);

            \end{tikzpicture}
            \caption{Twist graph representation of the $15+4$ puzzle}
            \label{Fig:Twist15+4}
            \end{center}
\end{figure}

\subsection{Twist Graphs}
Following this example, to keep track of how a puzzle rotates the tiles, we decorate each edge of the graph with an element of $\Z/m\Z$, which records how much a tile rotates when it slides along that edge.  More precisely, recall that a \emph{1-chain} $\gamma$ on a graph $\Gamma$  with coefficients in $\Z/m\Z$ consists of an element $\gamma_e \in \Z/m\Z$ for each oriented edge $e$ in $\Gamma$, subject to the condition that, if $e$ and $\overline{e}$ represent the same edge with opposite orientations, then $\gamma_{\overline{e}} = -\gamma_e$.  We define a \emph{twist graph} to be an ordered pair $(\Gamma, \gamma)$, where $\Gamma$ is a graph and $\gamma$ is a 1-chain on $\Gamma$ with coefficients in $\Z/m\Z$.  Returning to our example, Figure~\ref{Fig:Twist15+4} represents the $15+4$ puzzle by a twist graph $(\Gamma,\gamma)$.  Here, $m=4$, the solid lines represent edges $e \in E(\Gamma)$ with $\gamma_e \equiv 0 \Mod{4}$, and the dashed lines represent oriented edges $e$ with $\gamma_e \equiv 1 \Mod{4}$ when oriented from left to right.

We say that a twist graph $(\Gamma,\gamma)$ is \emph{twist bipartite} if $m$ is even and the vertices of $\Gamma$ can be colored using two colors such that:
\begin{enumerate}
\item  if $\gamma_e$ is even, then the endpoints of $e$ have different colors, and
\item  if $\gamma_e$ is odd, then the endpoints of $e$ have the same color.
\end{enumerate}
Equivalently, a twist graph is twist bipartite if the number of even edges in any cycle is even.  For example, the coloring of the vertices in Figure~\ref{Fig:Twist15+4} illustrates that the pictured twist graph is twist bipartite.  In particular, if one deletes the two dashed edges, then the resulting graph is bipartite, and in any 2-coloring of the vertices of this subgraph, the dashed edges connect vertices of the same color.

Our main result requires an assumption on the homology of the graph, which we now briefly explain.  The significance of this condition will be explored in greater detail at the end of Section~\ref{Sec:FundamentalGroup}.  A 1-chain $\omega$ on a graph $\Gamma$ is a \emph{1-cycle} if, for all vertices $v \in V(\Gamma)$, we have $\displaystyle\sum_{e \to v} \omega_e = 0$, where the sum is over all oriented edges with head $v$ \cite[p.~105]{Hatcher}.  The group of 1-cycles on $\Gamma$ with coefficients in $\Z/m\Z$ is called the \emph{first homology group} of $\Gamma$, and denoted $H_1 (\Gamma, \Z/m\Z)$.  Given a twist graph $(\Gamma,\gamma)$, there is a natural homomorphism $\varphi_{\gamma} \colon H_1 (\Gamma, \Z/m\Z) \to \Z/m\Z$ given by
\[
\varphi_{\gamma} (\omega) = \sum_{e \in E(\Gamma)} \gamma_e \cdot \omega_e  \Mod{m} .
\]
Throughout, we will restrict our attention to twist graphs for which the homomorphism $\varphi_{\gamma}$ is surjective.  In Lemmas~\ref{Lem:PuzzleToGraph} and~\ref{Lem:Gamma0Connected} of the next section, and the remarks immediately thereafter, we will see that we can always reduce to this case.  Note that if $m$ is even and $\varphi_{\gamma}$ is surjective, then there is at least one cycle in $\Gamma$ with an odd number of odd edges.  Thus, the twist graph $(\Gamma,\gamma)$ cannot be both bipartite and twist bipartite.  

Our main result is a classification of the possible permutations and rotations that can occur in sliding block puzzles where the tiles can rotate.

\begin{theorem}
\label{Thm:MainThm}
Let $(\Gamma,\gamma)$ be a 2-vertex connected twist graph with no loops and with $\varphi_{\gamma}$ surjective.  Suppose that $\Gamma$ is not a cycle, the graph $\Theta_5$ of Figure~\ref{Fig:Theta5}, the graph $\Theta_7$ of Figure~\ref{Fig:Theta7}, or any graph obtained from one of these three by replacing edges with multiple parallel edges.  Then the group of solvable ordered pairs $(\vec{x},\sigma) \in S(m,n)$ is:
\begin{itemize}
\item $\Big\{ (\vec{x},\sigma) \in S(m,n) \mbox{ } \vert \mbox{ } \sigma \in A_n \Big\}$ if $\Gamma$ is bipartite,
\item $\Big\{ (\vec{x},\sigma) \in S(m,n) \mbox{ } \vert \mbox{ } \sum_{i=1}^n x_i \equiv \mathrm{sign} (\sigma) \Mod{2} \Big\}$ if $m$ is even and $(\Gamma,\gamma)$ is twist bipartite, and
\item $S(m,n)$ otherwise.
\end{itemize}
\end{theorem}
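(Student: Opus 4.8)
The plan is to handle the permutation and the rotation of a solvable state separately. Write $\pi\colon S(m,n)\to S_n$ for the homomorphism recording the underlying permutation, let $G\le S(m,n)$ be the group of solvable ordered pairs, and let $\Sigma\colon S(m,n)\to\Z/m\Z$ be the homomorphism $(\vec x,\sigma)\mapsto\sum_{i=1}^n x_i$. The first step is to determine $\pi(G)$: applying Theorem~\ref{Thm:Wilson} to the simple graph underlying $\Gamma$ — legitimate by the Remark after that theorem, since parallel edges do not change which permutations are reachable — and using that $\Gamma$ is neither a cycle nor $\Theta_7$ nor a parallel-edge thickening of either, one gets $\pi(G)=A_n$ if $\Gamma$ is bipartite and $\pi(G)=S_n$ otherwise; in particular $A_n\subseteq\pi(G)$, and $n\ge 3$ since $\Gamma$ is $2$-connected and not a cycle.

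The inclusion ``$\subseteq$'' of the theorem comes from one local computation together with Theorem~\ref{Thm:Wilson}. The bipartite case is immediate, since Theorem~\ref{Thm:Wilson} gives $\sigma\in A_n$. Suppose instead that $m$ is even and $(\Gamma,\gamma)$ is twist bipartite. Pushing the blank once around a simple cycle $C$ of length $\ell$ through $v_0$ — a \emph{cyclic move} — induces an $(\ell-1)$-cycle on the tiles met along $C$ and rotates them by a total of $-\varphi_\gamma(C)\pmod m$, so the resulting pair $(\vec x,\sigma)$ satisfies $\Sigma(\vec x,\sigma)+\mathrm{sign}(\sigma)\equiv\#\{\text{even edges of }C\}\pmod 2$, which is $0$ because $(\Gamma,\gamma)$ is twist bipartite. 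Since $\Sigma+\mathrm{sign}$ is a homomorphism $S(m,n)\to\Z/2\Z$ and $G$ is generated by conjugates of cyclic moves — the description of $G$ in terms of $\pi_1(\Gamma,v_0)$ from Section~\ref{Sec:FundamentalGroup} — this homomorphism vanishes on $G$, i.e.\ every solvable pair has $\sum_i x_i\equiv\mathrm{sign}(\sigma)\pmod 2$. In the third case there is nothing to prove.

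For ``$\supseteq$'' let $K=G\cap\ker\pi\le(\Z/m\Z)^n$ be the group of rotations realizable with the identity permutation; it is a submodule of the permutation module, being normal in $G$ with $\pi(G)\supseteq A_n$ acting by permuting coordinates. I would pin $K$ down through two facts. First, $K$ contains the augmentation submodule $Z=\ker\Sigma=\{\vec x:\sum_i x_i=0\}$; granting this, the submodules of $(\Z/m\Z)^n$ containing $Z$ are in bijection with the subgroups of $\Z/m\Z$ via $K\mapsto\Sigma(K)$, so $K$ is determined by $\Sigma(K)$. Second, $\Sigma(K)$ is all of $\Z/m\Z$, unless $m$ is even and $(\Gamma,\gamma)$ is twist bipartite, in which case it is the index-two subgroup. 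The first fact is obtained by realizing a fixed $3$-cycle of tiles by two blank-walks differing only in routing one tile around a loop of nonzero twist, and dividing: the result lies in $K$ and, using surjectivity of $\varphi_\gamma$, can be arranged to equal $\pm(e_i-e_j)$. The second combines surjectivity of $\varphi_\gamma$ (which already gives $\Sigma(G)=\Z/m\Z$) with the claim that every permutation in $\pi(G)$ is realizable by a solvable pair whose total rotation meets the constraint of the previous paragraph — in particular with total rotation $0$ whenever the permutation is even — so that correcting a prescribed-rotation solution back to the identity permutation changes $\Sigma$ only within $\Sigma(K)$. Once $K$ is known, the theorem follows by assembling the pieces: in the bipartite and ``otherwise'' cases $K$ is the full kernel of $\pi$, so $G=\pi^{-1}(\pi(G))$ is the first or third group; in the twist-bipartite case $\pi(G)=S_n$ (such a graph cannot be bipartite) and $K$ has index two in $(\Z/m\Z)^n$, which together with the ``$\subseteq$'' inclusion forces $G=\{(\vec x,\sigma):\sum_i x_i\equiv\mathrm{sign}(\sigma)\pmod 2\}$.

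The main obstacle is establishing the two facts about $K$ for an \emph{arbitrary} $2$-connected twist graph rather than a conveniently structured one: the routing and correction arguments must work with no control over where the $1$-chain $\gamma$ is supported, and this is exactly where the exclusion of cycles, of $\Theta_5$ and $\Theta_7$, and of their parallel-edge thickenings enters — these are precisely the configurations for which the required routings do not exist. I expect this to require a reduction, in the spirit of Wilson's ear-decomposition induction but now carrying the rotation $1$-chain through each ear addition, to a short finite list of base twist graphs — built on $K_4$, small theta graphs, and the like — on which the two facts are verified by direct computation.
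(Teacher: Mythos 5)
Your architecture is sound and, in the kernel step, genuinely different from the paper's: you propose to show directly that $H = G\cap\ker\pi$ contains the augmentation submodule $Z=\ker\eta_{m,m}$, after which $H$ is determined by the subgroup $\eta_{m,m}(H)\le(\Z/m\Z)^n/Z\cong\Z/m\Z$, and that subgroup is controlled by the abelianization of $\pi(G)$. This would bypass the paper's machinery entirely (the paper instead classifies all $\pi(G)$-invariant subgroups of $(\Z/m\Z)^n$ via Mortimer's results on modular representations of $2$-transitive groups, together with a delicate induction on the prime factorization of $m$). Your ``$\subseteq$'' direction is correct and matches the paper's final paragraph. Two smaller repairs: your justification of fact (b) is circular as written (``every permutation is realizable with total rotation $0$ when even'' is what is being proved); replace it with the observation that $\eta_{m,m}(G)=\Z/m\Z$ and $\eta_{m,m}(G)/\eta_{m,m}(H)$ is a quotient of $\pi(G)^{\mathrm{ab}}$, hence trivial or $\Z/2\Z$. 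And in the bipartite case you need $n\ge 5$ (so that $A_n$ is perfect); this follows from the exclusion of the $4$-cycle and $\Theta_5$, but should be said.

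The genuine gap is fact (a), which you yourself flag as ``the main obstacle'': the routing argument is not carried out, and even its outline is insufficient. Comparing two blank-walks that differ by sending one tile around a loop of twist $a$ yields at best $a(e_i-e_j)$, and surjectivity of $\varphi_\gamma$ does \emph{not} provide a single simple closed path whose twist is a unit in $\Z/m\Z$ — it only guarantees that the twists of a generating set of loops \emph{jointly} generate $\Z/m\Z$. So ``can be arranged to equal $\pm(e_i-e_j)$'' is false for a single loop; you must accumulate $a_1(e_i-e_j),\dots,a_g(e_i-e_j)$ with $\langle a_1,\dots,a_g\rangle=\Z/m\Z$ and take a combination. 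Moreover, the existence of the required routings in an arbitrary $2$-connected graph is exactly the hard combinatorial content you defer to an unexecuted ear-decomposition induction. The paper shows this can be avoided entirely: for each free generator $p_i$ of $\pi_1(\Gamma,v)$ (a simple closed path of length $k_i$, chosen non-Hamiltonian via a breadth-first spanning tree), the element $(\vec{x}(p_i),\sigma_{p_i})^{k_i-1}$ lies in $\ker\pi$ and equals $a_i\cdot\mathbf{1}_{S_i}$ with $a_i=\varphi_\gamma(p_i)$ and $\emptyset\ne S_i\subsetneq V(\Gamma)\smallsetminus\{v\}$ (Proposition~\ref{Prop:CyclesInKernel}); applying a suitable $3$-cycle in $\pi(G)$ and subtracting then gives $a_i(e_u-e_w)\in H$, and $2$-transitivity of $\pi(G)$ plus $\langle a_1,\dots,a_g\rangle=\Z/m\Z$ yields $Z\subseteq H$ with no routing at all. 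With that substitution your plan closes up; without it, the proof is incomplete at its crux.
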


\begin{figure}[h]
		\begin{center}
                \begin{tikzpicture}[main/.style = {fill = black}]
    
                    \draw[main] (1,0) circle (4pt);
                    \draw[main] (-1,0) circle (4pt);
                    \draw[main] (0,0) circle (4pt);
                    \draw[main] (0,1) circle (4pt);
                    \draw[main] (0,-1) circle (4pt);
    
                    \draw (1,0) -- (0,1)  -- (-1,0) -- (0,-1) -- (1,0);
                    \draw (-1,0) -- (0, 0) -- (1,0);
                    
                    \draw (-0.8,0.8) node{$e_1$};
                    \draw (0.8,0.8) node{$e_2$};
                    \draw (-0.5,0.2) node{$e_3$};
                    \draw (0.5,0.2) node{$e_4$};
                    \draw (-0.8,-0.8) node{$e_5$};
                    \draw (0.8,-0.8) node{$e_6$};

            \end{tikzpicture}
            \caption{The graph $\Theta_5$}
            \label{Fig:Theta5}
            \end{center}
\end{figure}
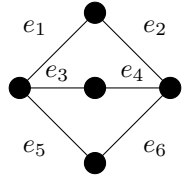

If either $\Gamma$ is bipartite or $(\Gamma,\gamma)$ is twist bipartite, then exactly half of the ordered pairs $(\vec{x},\sigma) \in S(m,n)$ are solvable, but for different reasons.  For example, the twist graph that represents the 15+4 puzzle, pictured in Figure~\ref{Fig:Twist15+4}, is twist bipartite.  Thus, by Theorem~\ref{Thm:MainThm}, the set of solvable ordered pairs $(\vec{x},\sigma) \in S(4,19)$ for the $15+4$ puzzle is
\[
\Big\{ (\vec{x},\sigma) \in S(4,19) \mbox{ } \vert \mbox{ } \sum_{i=1}^{19} x_i \equiv \mathrm{sign} (\sigma) \Mod{2} \Big\} .
\]
Thus, if you were to pop out one of the tiles, rotate it 90 degrees, and pop it back in again, the resulting puzzle would be unsolvable.  Similarly, if you were to pop out two of the tiles and swap them without rotating, the resulting puzzle would be unsolvable.  If, however, you were to pop out two of the tiles, swap them, and rotate one of them 90 degrees before popping them back in, the resulting puzzle would be solvable.

\subsection{Exceptional Cases}
If $\Gamma$ is a cycle, then the group of solvable permutations is cyclic.  In addition to the cycle graphs, Theorem~\ref{Thm:MainThm} has two exceptional cases.  Both of these are related to well-known exotic behaviors of the small-order symmetric and alternating groups.  Famously, $A_4$ is the only alternating group that is not simple.  The Klein 4-group $K$ is a normal subgroup of $A_4$, and we denote the quotient by
\[
q \colon A_4 \to A_4 /K \cong \Z/3\Z .
\]
Additionally, $S_6$ is the only symmetric group with a non-trivial outer automorphism.  This automorphism maps the ``standard'' copy of $S_5$ in $S_6$ isomorphically onto the ``exotic'' copy $\mathrm{PGL} (2,5) \cong S_5$.  The following characterizes the possible permutations and rotations that can occur in sliding block puzzles where the underlying graph is one of the two exceptional graphs $\Theta_5$ or $\Theta_7$.

\begin{theorem}
\label{Thm:Exceptional}
If $\Gamma = \Theta_5$ and $\varphi_{\gamma}$ is surjective, then the group of solvable order pairs $(\vec{x},\sigma) \in S(m,4)$ is:
\begin{enumerate}
\item $\Big\{ (\vec{x},\sigma) \in S(m,4) \mbox{ } \vert \mbox{ }\sigma \in A_4, \sum_{i=1}^4 x_i \equiv q(\sigma) \Mod{3} \Big\}$ \\ if $m$ is divisible by 3 and $\sum_{i=1}^6 \gamma_{e_i} \equiv 0 \Mod{3}$, and
\item $\Big\{ (\vec{x},\sigma) \in S(m,4) \mbox{ } \vert \mbox{ } \sigma \in A_4 \Big\}$ otherwise.
\end{enumerate}
If $\Gamma = \Theta_7$ and $\varphi_{\gamma}$ is surjective, then the group of solvable order pairs $(\vec{x},\sigma) \in S(m,6)$ is:
\begin{enumerate}
\item $\Big\{ (\vec{x},\sigma) \in S(m,6) \mbox{ } \vert \mbox{ }\sigma \in \mathrm{PGL}(2,5), \sum_{i=1}^6 x_i \equiv \mathrm{sign} (\sigma) \Mod{2} \Big\}$ \\ if $m$ is even and $(\Theta_7,\gamma)$ is twist bipartite, and
\item $\mathrm{PGL}(2,5)$ otherwise.
\end{enumerate}
\end{theorem}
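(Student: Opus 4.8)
The plan is to treat both exceptional graphs by the mechanism already used for Theorem~\ref{Thm:MainThm}. The group $G$ of solvable pairs sits in $S(m,n) = (\Z/m\Z)^n \rtimes S_n$ in a short exact sequence
\[
1 \longrightarrow R \longrightarrow G \longrightarrow \pi(G) \longrightarrow 1,
\]
where $\pi$ is the projection to $S_n$ and $R = G \cap (\Z/m\Z)^n$ is the group of solvable pure rotations. Forgetting rotations shows $\pi(G)$ is exactly the solvable group of the underlying untwisted puzzle, which Theorem~\ref{Thm:Wilson} identifies as $A_4$ for $\Gamma = \Theta_5$ (which is bipartite) and $\mathrm{PGL}(2,5)$ for $\Gamma = \Theta_7$; so the work is to pin down $R$ and the extension. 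I would use two ingredients. First, as in Wilson's analysis, $G$ is generated by the \emph{cycle moves}: for a cycle $C$ of $\Gamma$, drive the blank to a vertex of $C$, once around $C$, and back along the incoming path; this is a $(|C|-1)$-cycle on the tiles of $C$ whose rotations sum to $\varphi_{\gamma}([C])$. Taking the blank's home to lie on every simple cycle of $\Gamma$ (a ``pole'' of $\Theta_5$, or a diameter endpoint of $\Theta_7$), the cycle moves of the three simple cycles already generate $G$, since $H_1$ has rank $2$. Second, for each graph I would produce an explicit candidate obstruction: a homomorphism $\Phi \colon (\Z/m\Z)^n \rtimes \pi(G) \to \Z/d\Z$ with $d = 2$ for $\Theta_7$ and $d = 3$ for $\Theta_5$, and then argue that either $\Phi$ vanishes on the cycle-move generators, in which case $G = \ker\Phi$ (by checking both inclusions and counting orders, using that $R$ always contains $\{\vec x : \sum_i x_i \equiv 0 \Mod{d}\}$), or else $\Phi$ (or no such $\Phi$) fails, in which case $R = (\Z/m\Z)^n$ and $G = \{(\vec x,\sigma) : \sigma \in \pi(G)\}$; here one uses that $\ker\Phi$, having prime index, is a maximal subgroup.

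For $\Theta_7$ the argument parallels the twist-bipartite part of Theorem~\ref{Thm:MainThm}, with $S_6$ replaced by $\mathrm{PGL}(2,5)$. Take $\Phi(\vec x,\sigma) = \big(\sum_{i=1}^6 x_i \bmod 2\big) - \mathrm{sign}(\sigma)$; this is a homomorphism because $\sum_i x_i$ is additive on $S(m,6)$ and $\mathrm{sign}$ is a homomorphism on $S_6 \supseteq \mathrm{PGL}(2,5)$. On a cycle move, $\Phi(g_C) \equiv \varphi_{\gamma}([C]) - |C| \Mod{2}$; representing the two independent homology classes by two pentagons and a hexagon, the vanishing of $\Phi$ on these three cycle moves comes down to comparing each cycle's length with the parity of $\varphi_{\gamma}$ along it, and it holds for all three precisely when $m$ is even and $(\Theta_7,\gamma)$ is twist bipartite. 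In that case $G \subseteq \ker\Phi$; the reverse inclusion follows from $R \supseteq \{\vec x : \sum_i x_i \equiv 0 \Mod{2}\}$, using that $\mathrm{PGL}(2,5)$ acts $2$-transitively on the six vertices of $\Theta_7$ by fractional linear transformations. If $m$ is odd (no such $\Phi$) or $(\Theta_7,\gamma)$ is not twist bipartite (some $\Phi(g_C) \not\equiv 0$), a short computation with the cycle moves gives $R = (\Z/m\Z)^6$, hence $G = \{(\vec x,\sigma) : \sigma \in \mathrm{PGL}(2,5)\}$.

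For $\Theta_5$ the outline is identical with $\mathrm{sign}$ replaced by a quotient map $q \colon A_4 \to A_4/K \cong \Z/3\Z$, but now $q$ is not a function of cycle type in $S_4$, which makes the key computation subtler and is where the condition $\sum_{i=1}^6 \gamma_{e_i} \equiv 0 \Mod{3}$ enters. If $3 \nmid m$ there is no suitable $\Phi$, and from the three cycle moves of the three $4$-cycles of $\Theta_5$ together with the surjectivity of $\varphi_{\gamma}$ one checks directly that $R = (\Z/m\Z)^4$, so $G = \{(\vec x,\sigma) : \sigma \in A_4\}$. If $3 \mid m$, set $\Phi(\vec x,\sigma) = \big(\sum_{i=1}^4 x_i \bmod 3\big) - q(\sigma)$. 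Since $R \supseteq \{\vec x : \sum_i x_i \equiv 0 \Mod{3}\}$, one has $G \supseteq \ker\Phi$, so the question is whether $\Phi$ vanishes on the three cycle moves. A direct computation of the $3$-cycles they induce and of their $q$-values shows that $\Phi$ vanishes on all three, for a suitable identification $A_4/K \cong \Z/3\Z$, exactly when $\sum_{i=1}^6 \gamma_{e_i} \equiv 0 \Mod{3}$; in that case $G = \ker\Phi$ is the group of part~(1). When the condition fails, one combines two of the $4$-cycle moves (inverting one) into an element whose permutation part is a double transposition and whose rotations sum to a residue nonzero modulo $3$; squaring it produces a solvable pure rotation with $\sum_i x_i \not\equiv 0 \Mod{3}$, which forces $R = (\Z/m\Z)^4$ and $G = \{(\vec x,\sigma) : \sigma \in A_4\}$, part~(2).

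I expect the main obstacle to be the $\Theta_5$ bookkeeping just sketched: because $q$ separates the two $A_4$-conjugacy classes of $3$-cycles and is inverted by conjugation by an odd permutation, there is no shortcut past explicitly tracking the permutation each $4$-cycle move produces — conveniently through the action of $A_4$ on the three partitions of the four tiles into pairs — and matching its class against the residue $\varphi_{\gamma}([C]) \Mod{3}$, all the while keeping the edge orientations consistent with the way $\varphi_{\gamma}$ is computed and the blank's home fixed. A secondary technical point, shared by both graphs, is verifying that combinations of cycle moves and their $\pi(G)$-conjugates already generate the submodule $\{\vec x : \sum_i x_i \equiv 0 \bmod d\}$ of $(\Z/m\Z)^n$; this is where the failure of the degree-$n$ permutation module to split off its trivial summand in characteristic $d$ (with $d=3$, $n=4$ for $\Theta_5$ and $d=2$, $n=6$ for $\Theta_7$) intervenes, and it is exactly this phenomenon that puts $\Theta_5$ and $\Theta_7$ outside the scope of Theorem~\ref{Thm:MainThm}.
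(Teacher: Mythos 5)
Your overall architecture is the same as the paper's: Wilson's theorem gives $\pi(G)$ ($A_4$ for the bipartite graph $\Theta_5$, $\mathrm{PGL}(2,5)$ for $\Theta_7$), the problem reduces to pinning down the rotation subgroup $R = G\cap\ker\pi$, and the choice between the two candidate groups is decided by evaluating an obstruction on simple-closed-path generators of $\pi_1(\Gamma,v)$. Your $\Theta_5$ endgame (comparing the $q$-values of the two induced $3$-cycles with $\varphi_\gamma$ of the corresponding $4$-cycles and extracting $\sum_{i=1}^6\gamma_{e_i}\equiv 0\pmod 3$ from surjectivity) is exactly the paper's final computation, and for $\Theta_5$ your ``direct generation'' of $R$ actually works: the $\Z[A_4]$-module generated by the vectors $a_i\cdot\chi_{C_i}$ contains all of $\mathcal{C}^{\perp}_{m,m}$ (from orbit differences, since $\gcd(a_1,a_2,m)=1$) and has coordinate-sum image $3\langle a_1,a_2\rangle=3(\Z/m\Z)$, which gives $R\supseteq\mathcal{C}^{\perp}_{m,3}$ when $3\mid m$ and $R=(\Z/m\Z)^4$ otherwise.

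The genuine gap is in the step you dismiss as ``a secondary technical point,'' and it appears in the $\Theta_7$ half. The only elements of $R$ your construction explicitly supplies are the $(|C|-1)$-st powers of the three cycle moves, whose rotation vectors are $a_i\cdot\chi_{C_i}$; since the coordinate-sum map is a map of $\pi(G)$-modules to the trivial module, every element of the submodule these generate has coordinate sum in $\langle 4a_1,\,4a_2,\,5\varphi_\gamma(X)\rangle$, where $a_1,a_2$ are the values on the two pentagons and $X$ is the hexagon, with $\varphi_\gamma(X)=a_1-a_2$ (suitably oriented). Take $m=4$ and $\gamma$ with $a_1=a_2=1$ (so $\varphi_\gamma$ is surjective and $(\Theta_7,\gamma)$ is twist bipartite): this subgroup is $\langle 4,4,0\rangle=\{0\}$, so your explicit elements only show $R\supseteq\mathcal{C}^{\perp}_{4,4}$, strictly smaller than the required $\mathcal{C}^{\perp}_{4,2}$, and the maximality argument cannot start. ($R$ does contain further elements --- words in the cycle moves lying in $\ker\pi$ that are not powers of a single cycle move --- but you have not identified which ones to use.) This is precisely what the paper's Propositions~\ref{Prop:KernelModPrime} and~\ref{Prop:KernelClassification} supply: Mortimer's classification of invariant submodules mod each prime, Lemma~\ref{Lem:PIs2}'s use of the surjectivity of $\eta_{m,a}$ together with the normal-subgroup structure of $\pi(G)$, and the prime-power induction (whose hard case, $p^2\mid m$, is exactly the situation above). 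A smaller slip: ``$R\supseteq\mathcal{C}^{\perp}_{m,d}$ implies $G\supseteq\ker\Phi$'' is backwards --- it only implies $G$ is \emph{some} index-$d$ subgroup of $\pi^{-1}(\pi(G))$ containing $\mathcal{C}^{\perp}_{m,d}$; identifying it with $\ker\Phi$ requires the uniqueness of the surjection $\pi(G)\to\Z/d\Z$ (sign, or $q$ up to the choice of isomorphism $A_4/K\cong\Z/3\Z$), which is the content of Lemma~\ref{Lem:PIs2}.
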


\subsection{Other Puzzles}

The $15+4$ puzzle is not the only puzzle of this type.  Segerman's other creations include the hyperbolic 29-puzzle and the continental drift puzzle, pictured in Figures~\ref{Fig:Hyp29} and~\ref{Fig:Continental}, which he discusses in the YouTube videos:
\begin{align*}
\mbox{\url{https://www.youtube.com/watch?v=EitWHthBY30}} \\
\mbox{\url{https://www.youtube.com/watch?v=0uQx33KFMO0}.}
\end{align*}
The latter puzzle is a globe that is tiled like a soccer ball, where the 12 hexagonal faces are tiles that can slide around.  As in the $15+4$ puzzle, these are sliding block puzzles in which sliding the tiles may cause them to rotate.  Online versions of these and similar puzzles, written by the makers of HyperRogue, can be found at
\[
\mbox{\url{http://roguetemple.com/z/15/}.}
\]

\begin{figure}[h]
\begin{center}
\includegraphics[scale=0.2]{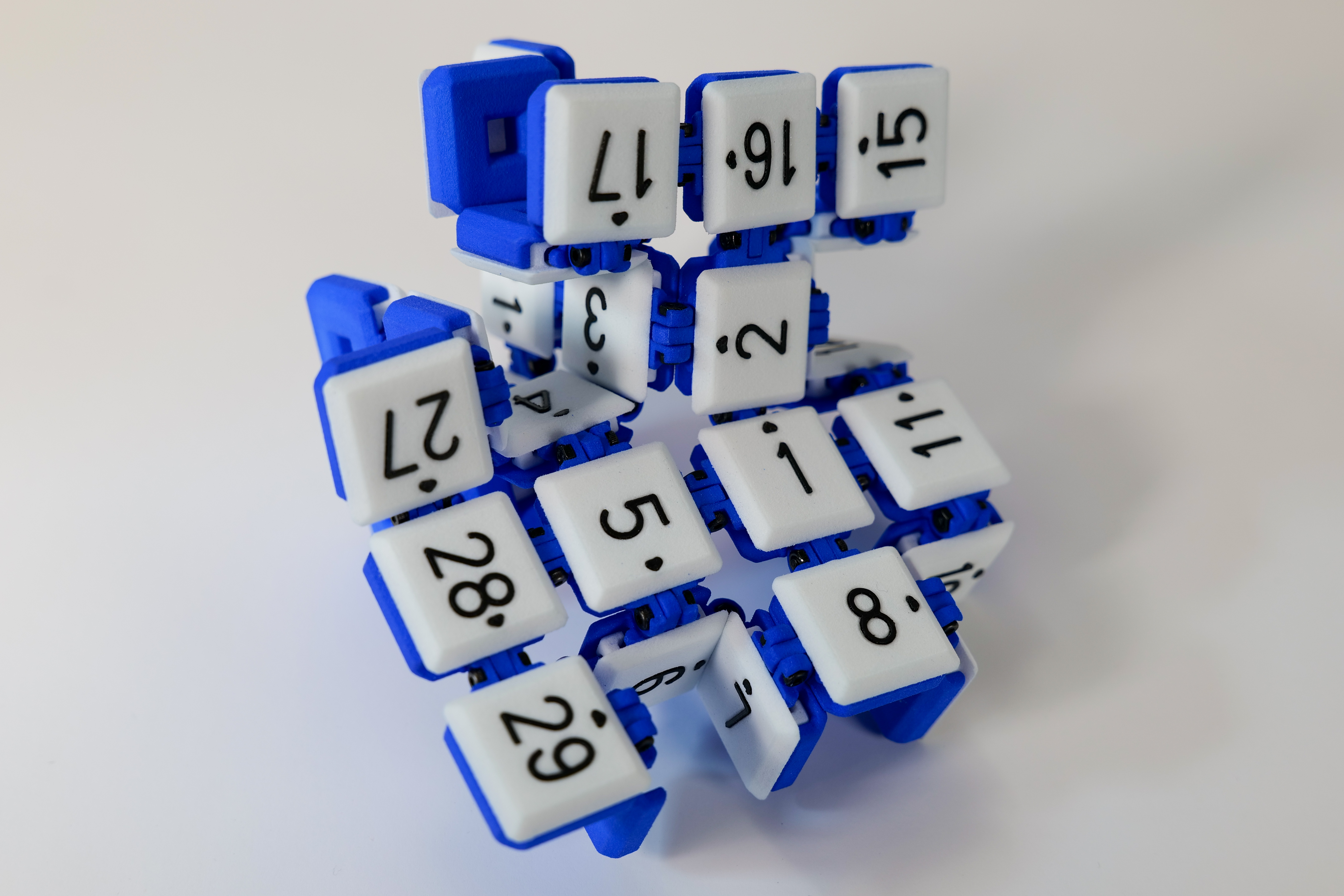}
\caption{Segerman's Hyperbolic 29-Puzzle (Photo courtesy of Henry Segerman)}
\label{Fig:Hyp29}
\end{center}
\end{figure}

\begin{figure}[h]
\begin{center}
\includegraphics[scale=0.15]{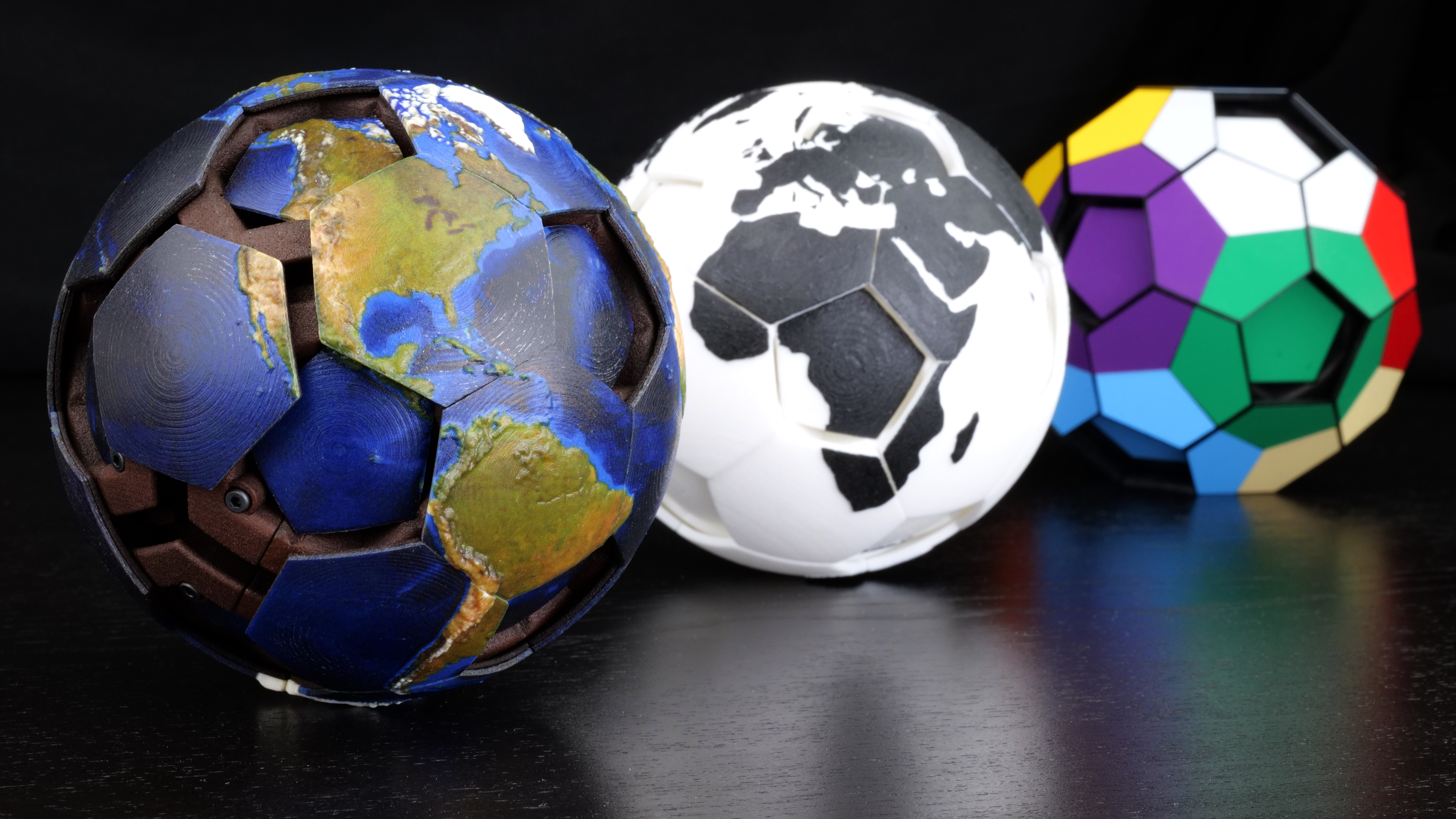}
\caption{Segerman's Continental Drift Puzzle (Photo courtesy of Henry Segerman)}
\label{Fig:Continental}
\end{center}
\end{figure}

\begin{remark}
\label{Rmk:Choice}
Given a puzzle like the $15+4$ puzzle or the continental drift puzzle, how do we construct the corresponding twist graph?  To start, we need to record how much a tile has been rotated even when that tile is not in its original position.  To do this, for each tile we choose one of its $m$ sides to call the ``top'' of the tile.  No matter where a tile is, it is unrotated if its top side lines up with the top side of the tile that is there in the puzzle's solved state.  In the $15+4$ puzzle, where the tiles have numbers written on them, there is a natural choice of top side for each tile.  If we consider the continental drift puzzle, however, it becomes clearer that this is an arbitrary choice.  For example, no side of the North Pole tile has a particularly strong claim to being its ``top''.  If we slide the tiles around so that the Madagascar tile is where the North Pole should be, none of the 6 possible rotations of the Madagascar tile is the natural ``correct'' one.  Thus, designating a top side for each tile forces us to make a choice.  The corresponding twist graph depends on how one makes this choice.
\end{remark}

The graphs representing the hyperbolic 29-puzzle and the continental drift puzzle are pictured in Figure~\ref{Fig:OtherPuzzles}.  In the first case, $m=4$ and in the second, $m=6$.  Both graphs contain a simple closed path of length 5, so neither is bipartite.  Following the remark above, one can choose the ``top'' side of each tile so that $\gamma_e \equiv \pm 1 \Mod{m}$ for every oriented edge $e$ in $\Gamma$.  Specifically, in Figure~\ref{Fig:OtherPuzzles}, the edges are labeled with arrows so that $\gamma_e \equiv 1 \Mod{m}$ when $e$ is oriented in the direction of the arrow.  Since $\gamma_e$ is odd for all $e$, both twist graphs are twist bipartite.  Finally, both graphs are planar, and if $\omega \in H_1 (\Gamma, \Z/m\Z)$ is a face of the planar embedding, then $\varphi_{\gamma} (\omega) \equiv \pm 1 \Mod{m}$.  (When $m=4$, this follows immediately from the fact that $\gamma_e \equiv \pm 1 \Mod{m}$ for all oriented edges $e$.  When $m=6$, it suffices to check that no pentagonal face has exactly 4 edges oriented either clockwise or counter-clockwise.)  Therefore, $\varphi_{\gamma}$ is surjective.  Thus, by Theorem~\ref{Thm:MainThm}, the set of solvable order pairs for the hyperbolic 29-puzzle is
\[
\Big\{ (\vec{x},\sigma) \in S(4,29) \mbox{ } \vert \mbox{ } \sum_{i=1}^{29} x_i \equiv \mathrm{sign} (\sigma) \Mod{2} \Big\} ,
\]
and the set of solvable ordered pairs for the continental drift puzzle is
\[
\Big\{ (\vec{x},\sigma) \in S(6,11) \mbox{ } \vert \mbox{ } \sum_{i=1}^{11} x_i \equiv \mathrm{sign} (\sigma) \Mod{2} \Big\} .
\]

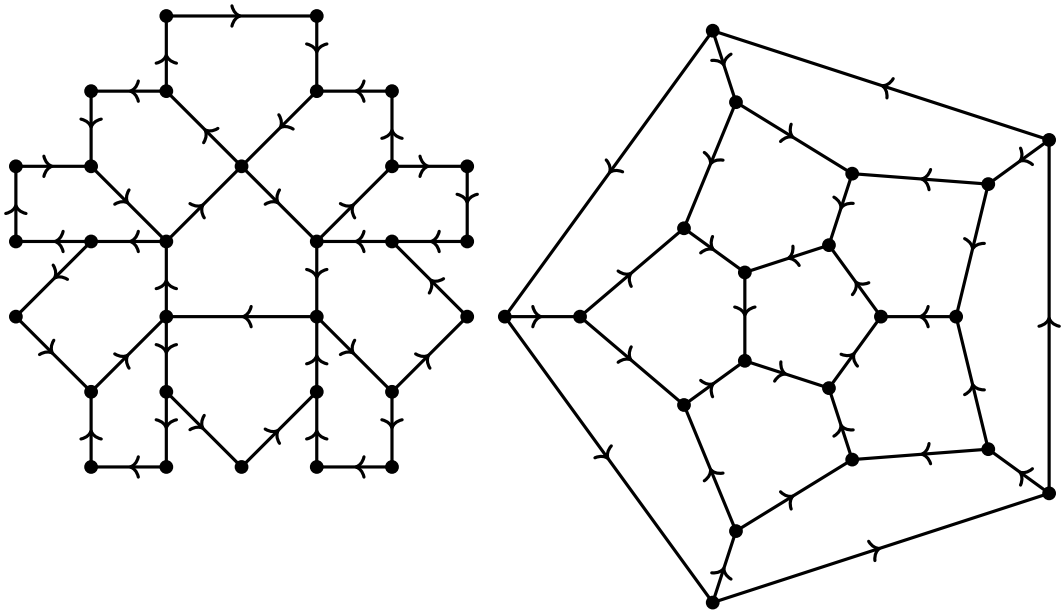
\begin{figure}[h]
		\begin{center}
               \begin{tikzpicture}[very thick,decoration={
    markings,
    mark=at position 0.5 with {\arrow{>}}}
    ]
\coordinate (A1) at ({7.5+cos(0)},{sin(0)});
\coordinate (A2) at ({7.5+cos(360/5)},{sin(360/5)});
\coordinate (A3) at ({7.5+cos(2*360/5)},{sin(2*360/5)});
\coordinate (A4) at ({7.5+cos(3*360/5)},{sin(3*360/5)});
\coordinate (A5) at ({7.5+cos(4*360/5)},{sin(4*360/5)});

\filldraw[black] (A1) circle (2pt);
\filldraw[black] (A2) circle (2pt);
\filldraw[black] (A3) circle (2pt);
\filldraw[black] (A4) circle (2pt);
\filldraw[black] (A5) circle (2pt);

\coordinate (B1) at ({7.5+2*cos(0)},{2*sin(0)});
\coordinate (B2) at ({7.5+2*cos(360/5)},{2*sin(360/5)});
\coordinate (B3) at ({7.5+2*cos(2*360/5)},{2*sin(2*360/5)});
\coordinate (B4) at ({7.5+2*cos(3*360/5)},{2*sin(3*360/5)});
\coordinate (B5) at ({7.5+2*cos(4*360/5)},{2*sin(4*360/5)});

\filldraw[black] (B1) circle (2pt);
\filldraw[black] (B2) circle (2pt);
\filldraw[black] (B3) circle (2pt);
\filldraw[black] (B4) circle (2pt);
\filldraw[black] (B5) circle (2pt);

\coordinate (C1) at ({7.5+3*cos(0+360/10)},{3*sin(0+360/10)});
\coordinate (C2) at ({7.5+3*cos(360/5+360/10)},{3*sin(360/5+360/10)});
\coordinate (C3) at ({7.5+3*cos(2*360/5+360/10)},{3*sin(2*360/5+360/10)});
\coordinate (C4) at ({7.5+3*cos(3*360/5+360/10)},{3*sin(3*360/5+360/10)});
\coordinate (C5) at ({7.5+3*cos(4*360/5+360/10)},{3*sin(4*360/5+360/10)});

\filldraw[black] (C1) circle (2pt);
\filldraw[black] (C2) circle (2pt);
\filldraw[black] (C3) circle (2pt);
\filldraw[black] (C4) circle (2pt);
\filldraw[black] (C5) circle (2pt);

\coordinate (D1) at ({7.5+4*cos(0+360/10)},{4*sin(0+360/10)});
\coordinate (D2) at ({7.5+4*cos(360/5+360/10)},{4*sin(360/5+360/10)});
\coordinate (D3) at ({7.5+4*cos(2*360/5+360/10)},{4*sin(2*360/5+360/10)});
\coordinate (D4) at ({7.5+4*cos(3*360/5+360/10)},{4*sin(3*360/5+360/10)});
\coordinate (D5) at ({7.5+4*cos(4*360/5+360/10)},{4*sin(4*360/5+360/10)});

\filldraw[black] (D1) circle (2pt);
\filldraw[black] (D2) circle (2pt);
\filldraw[black] (D3) circle (2pt);
\filldraw[black] (D4) circle (2pt);
\filldraw[black] (D5) circle (2pt);

\draw[postaction={decorate}] (A1) -- (A2);
\draw[postaction={decorate}] (A2) -- (A3);
\draw[postaction={decorate}] (A3) -- (A4);
\draw[postaction={decorate}] (A4) -- (A5);
\draw[postaction={decorate}] (A5) -- (A1);

\draw[postaction={decorate}] (C1) -- (B1);
\draw[postaction={decorate}] (C2) -- (B2);
\draw[postaction={decorate}] (C3) -- (B3);
\draw[postaction={decorate}] (C4) -- (B4);
\draw[postaction={decorate}] (C5) -- (B5);

\draw[postaction={decorate}] (B1) -- (A1);
\draw[postaction={decorate}] (B2) -- (A2);
\draw[postaction={decorate}] (B3) -- (A3);
\draw[postaction={decorate}] (B4) -- (A4);
\draw[postaction={decorate}] (B5) -- (A5);

\draw[postaction={decorate}] (C5) -- (B1);
\draw[postaction={decorate}] (C1) -- (B2);
\draw[postaction={decorate}] (C2) -- (B3);
\draw[postaction={decorate}] (C3) -- (B4);
\draw[postaction={decorate}] (C4) -- (B5);

\draw[postaction={decorate}] (D1) -- (C1);
\draw[postaction={decorate}] (D2) -- (C2);
\draw[postaction={decorate}] (D3) -- (C3);
\draw[postaction={decorate}] (D4) -- (C4);
\draw[postaction={decorate}] (D5) -- (C5);

\draw[postaction={decorate}] (D1) -- (D2);
\draw[postaction={decorate}] (D2) -- (D3);
\draw[postaction={decorate}] (D3) -- (D4);
\draw[postaction={decorate}] (D4) -- (D5);
\draw[postaction={decorate}] (D5) -- (D1);

\coordinate (B) at (-1,1);
\coordinate (C) at (0,2);
\coordinate (D) at (1,1);
\coordinate (E) at (1,0);
\coordinate (A) at (-1,0);
\coordinate (F) at (-2,1);
\coordinate (G) at (-3,0);
\coordinate (H) at (-2,-1);
\coordinate (I) at (-2,-2);

\coordinate (J) at (-1,-2);
\coordinate (K) at (-1,-1);
\coordinate (L) at (0,-2);
\coordinate (M) at (1,-1);
\coordinate (N) at (1,-2);
\coordinate (O) at (2,-2);
\coordinate (P) at (2,-1);
\coordinate (Q) at (3,0);
\coordinate (R) at (2,1);
\coordinate (S) at (3,1);
\coordinate (T) at (3,2);
\coordinate (U) at (2,2);
\coordinate (V) at (2,3);
\coordinate (W) at (1,3);
\coordinate (Z) at (1,4);
\coordinate (A1) at (-1,4);
\coordinate (B1) at (-1,3);
\coordinate (C1) at (-2,3);
\coordinate (D1) at (-2,2);
\coordinate (E1) at (-3,2);
\coordinate (F1) at (-3,1);

\filldraw[black] (A) circle (2pt);
\filldraw[black] (B) circle (2pt);
\filldraw[black] (C) circle (2pt);
\filldraw[black] (D) circle (2pt);
\filldraw[black] (E) circle (2pt);
\filldraw[black] (F) circle (2pt);
\filldraw[black] (G) circle (2pt);
\filldraw[black] (H) circle (2pt);
\filldraw[black] (I) circle (2pt);
\filldraw[black] (J) circle (2pt);
\filldraw[black] (K) circle (2pt);
\filldraw[black] (L) circle (2pt);
\filldraw[black] (M) circle (2pt);
\filldraw[black] (N) circle (2pt);
\filldraw[black] (O) circle (2pt);
\filldraw[black] (P) circle (2pt);
\filldraw[black] (Q) circle (2pt);
\filldraw[black] (R) circle (2pt);
\filldraw[black] (S) circle (2pt);
\filldraw[black] (T) circle (2pt);
\filldraw[black] (U) circle (2pt);
\filldraw[black] (V) circle (2pt);
\filldraw[black] (W) circle (2pt);
\filldraw[black] (Z) circle (2pt);
\filldraw[black] (A1) circle (2pt);
\filldraw[black] (B1) circle (2pt);
\filldraw[black] (C1) circle (2pt);
\filldraw[black] (D1) circle (2pt);
\filldraw[black] (E1) circle (2pt);
\filldraw[black] (F1) circle (2pt);

\draw[postaction={decorate}] (A) -- (B);
\draw[postaction={decorate}] (B) -- (C);
\draw[postaction={decorate}] (C) -- (D);
\draw[postaction={decorate}] (D) -- (E);
\draw[postaction={decorate}] (E) -- (A);
\draw[postaction={decorate}] (B) -- (F);
\draw[postaction={decorate}] (F) -- (G);
\draw[postaction={decorate}] (G) -- (H);
\draw[postaction={decorate}] (H) -- (A);
\draw[postaction={decorate}] (I) -- (H);
\draw[postaction={decorate}] (J) -- (I);
\draw[postaction={decorate}] (K) -- (J);
\draw[postaction={decorate}] (A) -- (K);
\draw[postaction={decorate}] (K) -- (L);
\draw[postaction={decorate}] (L) -- (M);
\draw[postaction={decorate}] (M) -- (E);
\draw[postaction={decorate}] (N) -- (M);
\draw[postaction={decorate}] (O) -- (N);
\draw[postaction={decorate}] (P) -- (O);
\draw[postaction={decorate}] (E) -- (P);
\draw[postaction={decorate}] (P) -- (Q);
\draw[postaction={decorate}] (Q) -- (R);
\draw[postaction={decorate}] (R) -- (D);
\draw[postaction={decorate}] (S) -- (R);
\draw[postaction={decorate}] (T) -- (S);
\draw[postaction={decorate}] (U) -- (T);
\draw[postaction={decorate}] (D) -- (U);
\draw[postaction={decorate}] (U) -- (V);
\draw[postaction={decorate}] (V) -- (W);
\draw[postaction={decorate}] (W) -- (C);
\draw[postaction={decorate}] (Z) -- (W);
\draw[postaction={decorate}] (A1) -- (Z);
\draw[postaction={decorate}] (B1) -- (A1);
\draw[postaction={decorate}] (C) -- (B1);
\draw[postaction={decorate}] (B1) -- (C1);
\draw[postaction={decorate}] (C1) -- (D1);
\draw[postaction={decorate}] (D1) -- (B);
\draw[postaction={decorate}] (F) -- (F1);
\draw[postaction={decorate}] (F1) -- (E1);
\draw[postaction={decorate}] (E1) -- (D1);
\end{tikzpicture}
            \caption{Graph representations of the hyperbolic 29-puzzle (left) and the continental drift puzzle (right)}
            \label{Fig:OtherPuzzles}
            \end{center}
\end{figure}

In the puzzles we study here, each tile has a cyclic group of rotations, but one can imagine other possibilities.  Figure~\ref{Fig:Cube} depicts a sliding block puzzle whose ``tiles'' are cubes.  The group of rotations of each individual tile is the rotation group of the cube, which is isomorphic to $S_4$.  One potential line of future inquiry could be to study puzzles of this type, where the group of rotations of the tiles are not cyclic.

\begin{figure}[h]
\begin{center}
\includegraphics[scale=0.15]{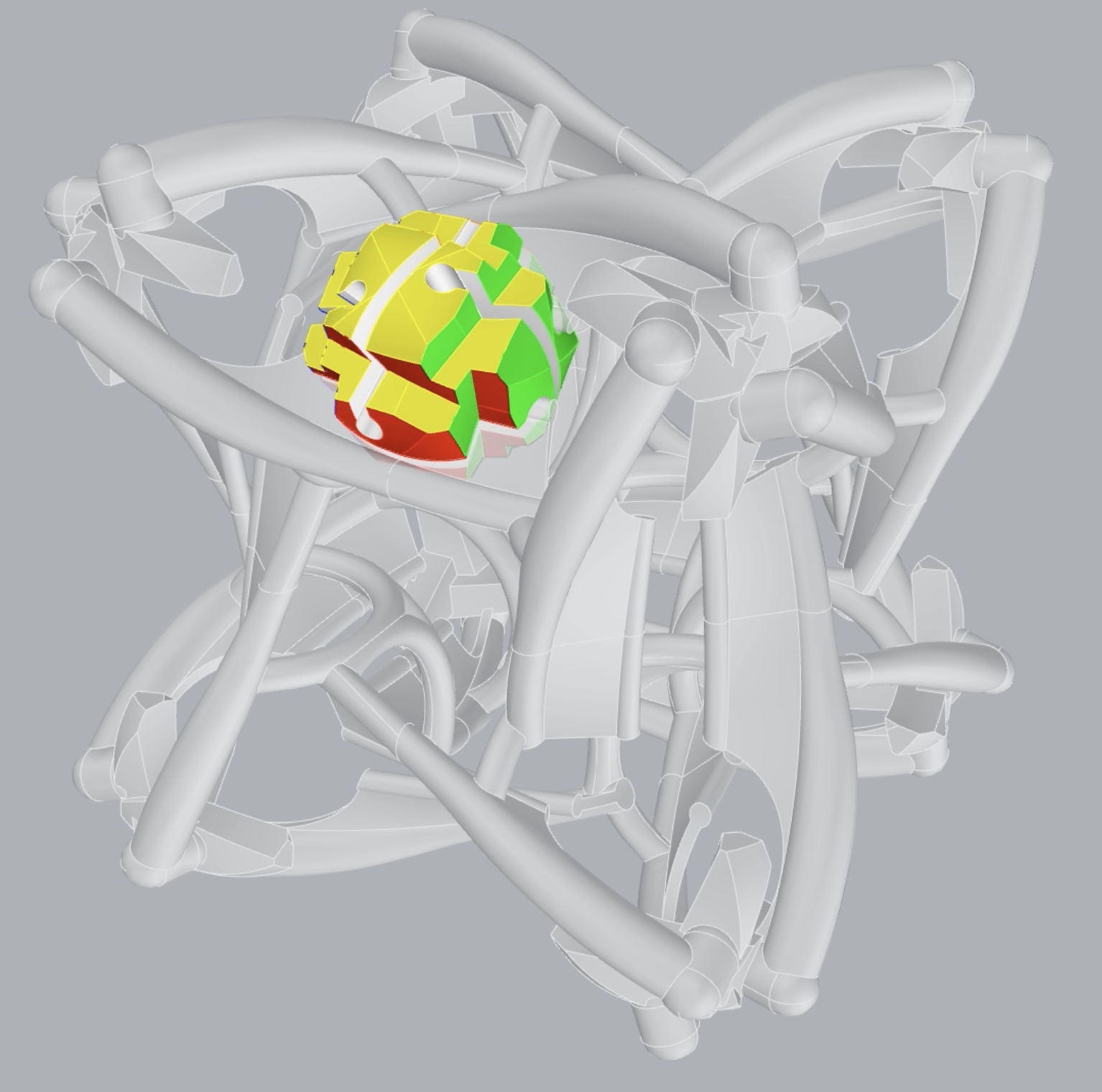}
\caption{A sliding block puzzle whose tiles are cubes (Photo courtesy of Henry Segerman)}
\label{Fig:Cube}
\end{center}
\end{figure}

\subsection{Outline of the paper}
In Section~\ref{Sec:FundamentalGroup}, we describe Wilson's construction of the group of solvable permutations, and some basic properties of the fundamental groups of graphs.  In Section~\ref{Sec:GenSym}, we discuss the generalized symmetric groups $S(m,n)$, and see that the set of solvable order pairs is a subgroup of $S(m,n)$.  Finally, in Section~\ref{Sec:Proof}, we prove Theorems~\ref{Thm:MainThm} and~\ref{Thm:Exceptional}.

\section{The fundamental group}
\label{Sec:FundamentalGroup}

In this section, we briefly describe the fundamental group of a graph and Wilson's construction of a homomorphism from this group to the symmetric group.  The fundamental group can be defined for more general (pointed) topological spaces (see, for example \cite[Proposition~1.3]{Hatcher}), but for our purposes, we will only need the fundamental groups of graphs.

\subsection{Paths and Homotopy}
A \emph{path} $p$ in a graph $\Gamma$ is a sequence of oriented edges
\[
p = e_1 e_2 \cdots e_k
\]
such that the head of $e_i$ is equal to the tail of $e_{i+1}$ for all $i$.  The path $p$ is called a \emph{closed path} if  the head of $e_k$ is equal to the tail of $e_1$.  In other words, a closed path is a path that starts and ends at the same vertex.  The path $p$ is called \emph{simple} if the heads of the edges $e_i$ are distinct and the tails of $e_i$ are distinct.  That is, a path is simple if it does not pass through a vertex more than once, with the possible exception that it may start and end at the same vertex.

If two closed paths $p = e_1 \dots e_k, q = f_1 \cdots f_{\ell}$ both start and end at the same vertex $v$, then their concatenation $pq = e_1 \cdots e_k f_1 \cdots f_{\ell}$ is a closed path that starts and ends at $v$.  This defines a product on the set of closed paths that start and end at the given vertex $v$.  This does not, however, define a group structure on this set, since there are no inverses.  To fix this, we define an equivalence relation on this set.

The closed path $p = e\overline{e}$ consisting of an oriented edge, followed by that same edge with the opposite orientation, is called the \emph{irrelevant closed path}.  We say that two paths are \emph{homotopy equivalent} if one can be obtained from the other by a sequence of inserting and deleting irrelevant closed paths.  The \emph{fundamental group} $\pi_1 (\Gamma , v)$ is the set of homotopy equivalence classes of closed paths that start and end at the vertex $v$, under the operation of concatenation.  The identity element of this group is the empty path, and the inverse of a path $p = e_1 \cdots e_k$ is the \emph{reverse path} $\overline{p} = \overline{e}_k \cdots \overline{e}_1$.

The first homology group $H_1 (\Gamma,\Z)$ defined in Section~\ref{Sec:Intro} is the abelianization of $\pi_1 (\Gamma,v)$.  In particular, any homomorphism from $\pi_1 (\Gamma,v)$ to an abelian group factors through $H_1 (\Gamma,\Z)$.  Moreover, any homomorphism from $\pi_1 (\Gamma,v)$ to an $m$-torsion abelian group factors through $H_1 (\Gamma,\Z/m\Z)$.

\subsection{Wilson's Construction}
Given an oriented edge $e$ in a graph $\Gamma$, Wilson defines $\sigma_e$ to be the permutation of $V(\Gamma)$ that transposes the head and tail of $e$.  For a path $p = e_1 \cdots e_k$, he then defines the permutation $\sigma_p$ by composing\footnote{In his paper, Wilson uses the convention that permutations are composed from left to right, whereas we prefer to compose them from right to left.} the transpositions
\[
\sigma_p = \sigma_{e_k} \circ \cdots \circ \sigma_{e_2} \circ \sigma_{e_1}.
\]
If $\Gamma$ represents a sliding block puzzle and the path $p$ starts at the empty vertex, then $\sigma_p$ is the permutation of the tiles obtained by sliding the tiles along the path $p$.  In other words, a tile that is initially located at a vertex $w$ will move to $\sigma_p (w)$ after performing this sequence of moves.  Note that, if $p$ is a simple closed path of length $k$, then $\sigma_p$ is the $(k-1)$-cycle
\[
\sigma_p = \Big( h(e_1) h(e_2) \cdots h(e_{k-1}) \Big),
\]
where $h(e_i)$ denotes the head of $e_i$.

Because every transposition is its own inverse, the map $\sigma$ takes homotopy equivalent paths to the same permutation.  Thus, it descends to a well-defined map on the fundamental group $\pi_1 (\Gamma , v)$.  By construction, this map is a homomorphism, and every permutation in the image fixes the vertex $v$.  The group of solvable permutations in which the empty vertex is fixed is simply the image of the homomorphism $\sigma$.  The possibilities for this image are classified in Theorem~\ref{Thm:Wilson}.

\begin{remark}
The set of solvable permutations that do \emph{not} fix the empty vertex does not form a group, because one can only concatenate two permutations if the position of the empty vertex at the end of the first permutation agrees with that at the start of the second permutation.  This set does form an arguably more natural object called a \emph{groupoid}, but this is beyond the scope of the present article \cite[Definition~1.1.11]{Riehl}.
\end{remark}

\subsection{Generators and the Map $\varphi_{\gamma}$}

The fundamental group of a graph has a well-known set of generators.

\begin{lemma}
\label{Lem:Generators}
Let $\Gamma$ be a connected graph, and let $v$ be any vertex in $\Gamma$.  Then $\pi_1 (\Gamma, v)$ is freely generated by $g$ simple closed paths, where $g = \vert E(\Gamma) \vert - \vert V(\Gamma) \vert + 1$.  Moreover if $\Gamma$ is not a cycle, or obtained from a cycle by replacing an edge with multiple parallel edges, then these simple closed paths can be chosen so they are non-Hamiltonian.  
\end{lemma}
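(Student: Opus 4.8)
The plan is to exhibit an explicit free basis of $\pi_1(\Gamma,v)$ coming from a spanning tree, and then, for the ``moreover'' clause, to argue that the spanning tree can be chosen so that none of the basis cycles runs through every vertex --- unless $\Gamma$ is one of the forbidden graphs.

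First I would fix a spanning tree $T\subseteq\Gamma$. Since $T$ has $|V(\Gamma)|-1$ edges, exactly $g$ edges $e_1,\dots,e_g$ of $\Gamma$ lie outside $T$, and for each one $T+e_i$ contains a unique simple closed path $C_i$. The standard description of the fundamental group of a graph (see \cite[Section~1.A]{Hatcher}) says that $\pi_1(\Gamma,v)$ is freely generated by the loops $\gamma_i$ that run from $v$ to an endpoint of $e_i$ inside $T$, cross $e_i$, and return to $v$ inside $T$. Letting $r_i$ be the vertex of $C_i$ nearest to $v$ in $T$ (equivalently, the median in $T$ of $v$ and the two endpoints of $e_i$) and $\beta_i$ the simple path in $T$ from $v$ to $r_i$, a direct check---decomposing each $T$-path from $v$ to an endpoint of $e_i$ through $r_i$---shows that $\gamma_i$ is homotopic to $\beta_i\,C_i\,\overline{\beta_i}$, with $C_i$ based at $r_i$. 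Identifying a simple closed path $C$ with the class of $\beta C\overline{\beta}$ it determines in $\pi_1(\Gamma,v)$ through $T$, this already gives the first assertion: $\pi_1(\Gamma,v)$ is freely generated by the $g$ simple closed paths $C_1,\dots,C_g$.

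For the ``moreover'' clause, the key point is that $C_i$ is Hamiltonian exactly when $C_i-e_i$ is a Hamiltonian path of the tree $T$, i.e. exactly when $T$ itself is a path. Consequently, if $\Gamma$ has any spanning tree that is not a path, I would choose such a $T$, and then every $C_i$ is non-Hamiltonian and we are done. The only remaining possibility is that \emph{every} spanning tree of $\Gamma$ is a path, and here I claim that the simple graph $\Gamma'$ obtained from $\Gamma$ by replacing each class of parallel edges by a single edge must be a path or a cycle (note every spanning tree of $\Gamma'$ is also a path, since a spanning tree of $\Gamma'$ lifts to one of $\Gamma$). Granting the claim: if $\Gamma'$ is a cycle, then $\Gamma$ is a cycle, or is obtained from a cycle by replacing edges with parallel edges, and is therefore excluded by hypothesis; if $\Gamma'$ is a path, then $\Gamma$ is a path with some edges duplicated, so either $g=0$ and there is nothing to prove, or $\Gamma$ has at least three vertices---a graph on at most two vertices being a tree ($g=0$) or an excluded multi-edge cycle---and then with $T=\Gamma'$ each non-tree edge is a parallel copy of an edge $v_iv_{i+1}$ of the path, so the corresponding $C_i$ is the length-two cycle on $\{v_i,v_{i+1}\}$, which is non-Hamiltonian.

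It therefore remains to prove the claim, and this chord-swapping step is the heart of the matter. Write $T=v_0v_1\cdots v_N$ for a path spanning tree of $\Gamma'$, and let $e$ be any edge of $\Gamma'$ outside $T$; since $\Gamma'$ is simple, $e$ joins $v_a$ to $v_b$ with $b\ge a+2$. If $a\ne 0$, then $T-\{v_{b-1}v_b\}+\{e\}$ is a spanning tree of $\Gamma'$ in which $v_a$ has degree three, hence not a path---a contradiction. If $b\ne N$, then $T-\{v_av_{a+1}\}+\{e\}$ is a spanning tree in which $v_b$ has degree three, again a contradiction. So $a=0$ and $b=N$; since $\Gamma'$ is simple there is at most one such edge, so $\Gamma'$ is the path $T$ or the cycle $T+\{v_0v_N\}$, proving the claim. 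The only real obstacle in all of this is getting the bookkeeping of the chord-swapping argument right in the presence of multiple edges and of small degenerate graphs (in particular, making sure the two-vertex case is correctly absorbed into the forbidden family); everything else is an assembly of standard facts about fundamental groups of graphs and about spanning trees.
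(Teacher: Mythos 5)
Your proof is correct, and the first assertion is handled exactly as in the paper (spanning tree, fundamental cycles of the non-tree edges, Hatcher's freeness result, with the conjugating tree paths $\beta_i$ made explicit). For the ``moreover'' clause, however, you take a genuinely different route. The paper argues that, outside the excluded family, $\Gamma$ is either a tree or has a vertex $w$ with at least three distinct neighbours; it then takes a breadth-first-search spanning tree rooted at $w$ and observes that each fundamental cycle passes through at most two of the depth-one vertices, hence omits some neighbour of $w$ and is non-Hamiltonian. You instead note that a Hamiltonian fundamental cycle forces the spanning tree itself to be a Hamiltonian path, reduce to the case in which every spanning tree is a path, and classify those graphs by chord-swapping. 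The paper's argument is shorter; yours is more careful on the multigraph fringe. For instance, a three-vertex path with one edge doubled is connected, is not a tree, has no vertex with three neighbours, and is not excluded by the hypothesis, so the paper's dichotomy (``tree or a vertex with at least three neighbours'') literally fails there even though the lemma's conclusion holds; your case analysis covers this graph via the ``$\Gamma'$ is a path'' branch. (This gap is harmless for the paper's applications, which are to 2-vertex-connected graphs.) Both arguments rest on reading a two-vertex multigraph as ``obtained from a cycle by replacing an edge with parallel edges''---as it must be, since its length-two generators are unavoidably Hamiltonian---and neither addresses loops, which the lemma as stated does not exclude but the paper's applications do.
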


\begin{proof}
The first part is standard.  Let $T \subseteq \Gamma$ be a spanning tree, and let $e_1 , \ldots , e_g$ be the edges of $\Gamma$ that are not contained in $T$.  For each $e_i$, there is a unique simple path in $T$ from the head of $e_i$ to its tail.  Appending $e_i$ to the end of this path, we obtain a simple closed path $p_i$ in $\Gamma$.  By \cite[Proposition~1A.2]{Hatcher}, the fundamental group $\pi_1 (\Gamma)$ is the free group on the generators $p_1 , \ldots , p_g$.

For the second part, if $\Gamma$ is not a cycle, or obtained from a cycle by replacing an edge with multiple parallel edges, then either $\Gamma$ is a tree, in which case the fundamental group is trivial, or there exists a vertex $w$ in $\Gamma$ that has at least 3 neighbors.  Construct a spanning tree $T$ by breadth-first search starting from $w$.  By construction, the simple closed paths $p_i$ contain at most 2 of the neighbors of $w$.  Hence, they are non-Hamiltonian.
\end{proof}

Lemma~\ref{Lem:Generators} allows us to describe a large family of twist graphs $(\Gamma,\gamma)$ for which $\varphi_{\gamma}$ is surjective.  Specifically, consider the subgraph $\Gamma_0 \subseteq \Gamma$ with $V(\Gamma_0) = V(\Gamma)$ and $E(\Gamma_0) = \{ e \in E(\Gamma) \vert \gamma_e \equiv 0 \Mod{m} \}$.  In other words, $\Gamma_0$ is the subgraph of $\Gamma$ consisting of edges that do not rotate the tiles.  We first note the following.

\begin{lemma}
\label{Lem:PuzzleToGraph}
Every sliding block puzzle can be represented by a twist graph $(\Gamma,\gamma)$ such that $\Gamma_0$ is connected.
\end{lemma}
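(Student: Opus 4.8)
The plan is to exploit the freedom in building the twist graph described in Remark~\ref{Rmk:Choice}. The underlying graph $\Gamma$ (vertices $=$ positions, edges $=$ adjacencies) is determined by the puzzle and is connected, so only the $1$-chain $\gamma$ depends on a choice, namely the choice of a ``top'' side for each tile. My first step is to pin down exactly how $\gamma$ changes under a different such choice. If, for each vertex $v$, one rotates the designated top of the solved-state tile belonging at $v$ by an amount $c_v \in \Z/m\Z$, then the rotation recorded for any tile occupying $v$ changes by a fixed $\pm c_v$. Since $\gamma_e$ records the change in a tile's rotation as it slides from the tail of $e$ to its head, $\gamma_e$ changes by $\pm(c_{h(e)} - c_{t(e)})$. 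Absorbing the global sign into $c$, this says: for every function $c \colon V(\Gamma) \to \Z/m\Z$, the pair $(\Gamma, \gamma + \delta c)$, where $(\delta c)_e := c_{h(e)} - c_{t(e)}$, is again a twist graph representing the same puzzle.

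Granting this, the lemma reduces to the standard fact that a $1$-chain can be trivialized on a tree by a coboundary. Fix any twist-graph representation $(\Gamma,\gamma)$ of the puzzle and a spanning tree $T \subseteq \Gamma$. Root $T$ at a vertex $r$, set $c_r = 0$, and define $c$ on the remaining vertices by descending through $T$: if $e \in E(T)$ joins an already-assigned vertex $t(e)$ to its child $h(e)$, put $c_{h(e)} := c_{t(e)} - \gamma_e$. Since $T$ is acyclic this assignment is consistent and well-defined, and by construction $(\gamma + \delta c)_e \equiv 0 \Mod{m}$ for every $e \in E(T)$. Setting $\gamma' = \gamma + \delta c$, the puzzle is represented by $(\Gamma, \gamma')$, and the subgraph $\Gamma_0$ associated to $\gamma'$ contains $T$; a spanning subgraph that contains a spanning tree is connected, so $\Gamma_0$ is connected, as required.

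The only real content lies in the first step: verifying, with the correct signs, that re-choosing the tile tops alters $\gamma$ by precisely a coboundary $\delta c$, and that every such $c$ is realized by some choice of tops. Once that bookkeeping is settled, the remainder is the elementary propagation along a spanning tree carried out above. I therefore expect the sign/realizability check in the first paragraph to be the main (and essentially the only) obstacle.
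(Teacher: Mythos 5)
Your proposal is correct and is essentially the paper's argument: the paper also fixes a rooted spanning tree and inductively chooses the top side of each tile, working outward from the root, so that $\gamma_e = 0$ on every tree edge, whence $T \subseteq \Gamma_0$ and $\Gamma_0$ is connected. Your coboundary formulation $\gamma \mapsto \gamma + \delta c$ just makes explicit the freedom that the paper uses implicitly when it asserts the top side at each new vertex can be chosen to kill $\gamma_e$ on the incoming tree edge.
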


\begin{proof}
The graph $\Gamma$ is uniquely determined by the puzzle -- it has a vertex for every possible position of the tiles and an edge between two vertices if the corresponding vertices are adjacent.  Let $T \subseteq \Gamma$ be a rooted spanning tree.  Now, arbitrarily choose a side of the tile at the root vertex to be the ``top'' of the tile.  We then inductively choose the top side of each tile so that $\gamma_e = 0$ for all edges $e$ in $T$.  Specifically, let $v$ be a vertex for which we have already chosen a top side, let $w$ be connected to $v$ by an edge $e$ in $T$, and suppose that we have not already chosen a top side for $w$.  We can then choose the top side of the tile at $w$ so that $\gamma_e = 0$.  Proceeding in this way from the root vertex to each vertex in $\Gamma$, we see that $T \subseteq \Gamma_0$, so $\Gamma_0$ is connected.
\end{proof}

Lemma~\ref{Lem:PuzzleToGraph} is important for the following reason.

\begin{lemma}
\label{Lem:Gamma0Connected}
Suppose that $\Gamma_0$ is connected.  The map $\varphi_{\gamma}$ is surjective if and only if the set $\{ \gamma_e \vert e \in E(\Gamma) \}$ generates $\Z/m\Z$.  Equivalently, $\varphi_{\gamma}$ is surjective if and only if the set $\{ m, \gamma_e \vert e \in E(\Gamma) \}$ has no common divisor greater than 1.
\end{lemma}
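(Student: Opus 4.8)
The plan is to prove the two equivalences in the statement separately. The second one — that $\{\gamma_e : e \in E(\Gamma)\}$ generates $\Z/m\Z$ if and only if $\{m\}\cup\{\gamma_e : e \in E(\Gamma)\}$ has no common divisor greater than $1$ — is a standard fact about cyclic groups: the subgroup of $\Z/m\Z$ generated by a set $S$ of residues equals $d\,\Z/m\Z$, where $d = \gcd\big(\{m\}\cup S\big)$ computed with integer representatives, and this is all of $\Z/m\Z$ exactly when $d=1$. So the substance of the lemma is the equivalence between surjectivity of $\varphi_{\gamma}$ and the condition that the $\gamma_e$ generate $\Z/m\Z$, and I would prove the two implications of that equivalence in turn.

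For the ``only if'' direction, I would note first that it needs no hypothesis on $\Gamma_0$. Directly from the definition $\varphi_{\gamma}(\omega) = \sum_{e \in E(\Gamma)} \gamma_e \cdot \omega_e$, every value of $\varphi_{\gamma}$ is an integer combination of (lifts of) the residues $\gamma_e$, so the image of $\varphi_{\gamma}$ is contained in the subgroup of $\Z/m\Z$ generated by $\{\gamma_e : e \in E(\Gamma)\}$. Hence if $\varphi_{\gamma}$ is surjective this subgroup must be all of $\Z/m\Z$.

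The ``if'' direction is where connectedness of $\Gamma_0$ is used. Since the image of $\varphi_{\gamma}$ is a subgroup of $\Z/m\Z$, it is enough to show that $\gamma_e$ belongs to the image for every edge $e \in E(\Gamma)$; the generation hypothesis then forces the image to be everything. If $\gamma_e \equiv 0 \Mod m$ this is immediate, so suppose $\gamma_e \not\equiv 0 \Mod m$, and orient $e$ from a vertex $v$ to a vertex $w$; note $e \notin E(\Gamma_0)$. Because $\Gamma_0$ is connected and has the same vertices as $\Gamma$, there is a path $P$ in $\Gamma_0$ from $w$ to $v$; concatenating $e$ with $P$ gives a closed path based at $v$, hence an element of $\pi_1(\Gamma,v)$ and so, via the map to $H_1(\Gamma,\Z/m\Z)$ discussed in Section~\ref{Sec:FundamentalGroup}, a $1$-cycle $\omega$ whose coefficient on each oriented edge is the net number of times the closed path traverses it. Since $P$ lies in $\Gamma_0$ it cannot traverse $e$, so $\omega_e = 1$, while every edge $f$ with $\omega_f \neq 0$ other than $e$ lies in $\Gamma_0$ and hence has $\gamma_f \equiv 0 \Mod m$. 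Therefore $\varphi_{\gamma}(\omega) = \sum_f \gamma_f \omega_f = \gamma_e$, as desired.

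I do not anticipate a serious obstacle: the lemma is elementary once the definitions are unwound. The one point requiring a little care is the reverse implication — namely, producing an honest $1$-cycle realizing each $\gamma_e$ in the image of $\varphi_{\gamma}$, and checking that the auxiliary path $P$, living entirely in $\Gamma_0$, contributes nothing to $\varphi_{\gamma}$. It is also worth flagging explicitly that the hypothesis ``$\Gamma_0$ connected'' is needed only for this direction, and that, combined with Lemma~\ref{Lem:PuzzleToGraph}, it shows the surjectivity assumption on $\varphi_{\gamma}$ in Theorems~\ref{Thm:MainThm} and~\ref{Thm:Exceptional} entails no loss of generality.
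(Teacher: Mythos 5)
Your proof is correct and uses essentially the same idea as the paper: connectedness of $\Gamma_0$ lets you close up each edge $e$ with $\gamma_e \not\equiv 0$ into a cycle whose remaining edges all lie in $\Gamma_0$, so that $\varphi_{\gamma}$ evaluates to exactly $\gamma_e$ on it. The paper packages this by taking a spanning tree of $\Gamma_0$ and invoking the generators of $\pi_1(\Gamma,v)$ from Lemma~\ref{Lem:Generators}, each of which contains exactly one non-tree edge, whereas you build one cycle per edge directly; the substance is identical.
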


\begin{proof}
Since $\Gamma_0$ is connected, it contains a spanning tree $T$.  The subgraph $\Gamma_0$ has the same set of vertices as $\Gamma$, so $T$ is also a spanning tree for $\Gamma$.  In the construction of Lemma~\ref{Lem:Generators}, each of the simple closed paths $p_i$ contains exactly 1 edge that is not in the spanning tree.  Thus, each of the simple closed paths $p_i$ contains at most 1 edge $e_i$ such that $\gamma_{e_i} \not\equiv 0 \Mod{m}$.  Because these simple closed paths generate $\pi_1 (\Gamma,v)$, their images in first homology generate $H_1 (\Gamma,\Z)$.  It follows that the map $\varphi_{\gamma}$ is surjective if and only if the elements $\gamma_{e_1} , \ldots \gamma_{e_g}$ generate $\Z/m\Z$.  The last statement holds because the subgroups of $\Z/m\Z$ are precisely the cyclic subgroups generated by divisors of $m$.
\end{proof}

For example, consider the twist graph $(\Gamma,\gamma)$ pictured in Figure~\ref{Fig:Twist15+4}, representing the $15+4$ puzzle.  The subgraph $\Gamma_0$ is obtained by deleting the two dashed edges.  We see that $\Gamma_0$ is connected.  If $e$ is one of the dashed edges, oriented from left to right, then $\gamma_e \equiv 1 \Mod{4}$, which generates $\Z/4\Z$.  Thus, $\varphi_{\gamma}$ is surjective.

On the other hand, if there exists an integer $a>1$ dividing both $m$ and $\gamma_e$ for all $e \in E(\Gamma)$, then the only possible rotations are multiples of $a$.  It is therefore natural to recast the problem -- instead of the tiles having $m$ sides, one should think of them as having $\frac{m}{a}$ sides, and we obtain a twist graph with coefficients in $\Z/\frac{m}{a}\Z$ by dividing $\gamma_e$ by $a$ for all $e$.  Reducing the problem in this way, one can apply Theorem~\ref{Thm:MainThm} to all twist graphs for which $\Gamma_0$ is connected.

\section{Generalized symmetric groups}
\label{Sec:GenSym}

\subsection{Examining the group structure}
In this section, we generalize Wilson's construction to record both the permutation of the tiles and their possible rotations.  Given a path $p$ in the graph $\Gamma$, we will define an ordered pair $(\vec{x}(p),\sigma_p)$, where $\sigma_p$ is the permutation of the previous section, and $\vec{x}(p)$ is a vector with coefficients in $\Z/m\Z$.  First, however, we need to define the group structure on this set of ordered pairs.  Importantly, we will see that the product group $(\Z/m\Z)^n \times S_n$ is not appropriate for the task, because in sliding block puzzles the permutations and rotations do not commute.

To see this, consider the sliding block puzzle pictured on the left in Figure~\ref{Fig:NoCommute}.  In this example, $m=2$, and sliding a tile across the dashed edge rotates that tile by 180 degrees.  The empty vertex is the upper left one.  The closed path $p$ starts at the empty vertex, proceeds to the right along the solid edge, and then back to the left along the dashed edge.   Sliding the tiles along the closed path $p$ does not permute the tiles, but does rotate the tile labeled 1.  The closed path $q$ starts at the empty vertex and proceeds counter-clockwise around the solid triangle.  Sliding the tiles along the closed path $q$ transposes the tiles labeled 1 and 2, but does not rotate any tiles.  On the righthand side of the figure, we see how the tiles are rotated and permuted by the closed paths $pq$ (on the top) and $qp$ (on the bottom).  Crucially, we see that they are not the same.

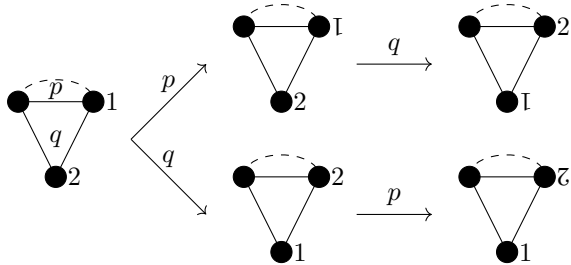
\begin{figure}[h]
		\begin{center}
                \begin{tikzpicture}[main/.style = {fill = black}]
    
                    \draw[main] (0,0) circle (4pt);
                    \draw[main] (1,0) circle (4pt);
                    \draw[main] (0.5,-1) circle (4pt);
                    \draw (0,0) -- (1,0)  -- (0.5,-1) -- (0,0);
                    \draw [dashed] (0,0) to [out=90, in=90] (1,0);                     
                    \draw (0.5,-0.5) node{$q$};
                    \draw (0.5,0.15) node{$p$};
                    \draw (1.25,0) node{$1$};
                    \draw (0.75,-1) node{$2$};
                    
                    \draw[->] (1.5,-0.5)--(2.5,0.5) node[midway, above] {$p$};
                    
                    \draw[main] (3,1) circle (4pt);
                    \draw[main] (4,1) circle (4pt);
                    \draw[main] (3.5,0) circle (4pt);    
                    \draw (3,1) -- (4,1)  -- (3.5,0) -- (3,1);
                    \draw [dashed] (3,1) to [out=90, in=90] (4,1);                     
                    \draw (4.25,1)  node[rotate=180]{$1$};
                    \draw (3.75,0) node{$2$};
                    
                    \draw[->] (1.5,-0.5)--(2.5,-1.5) node[midway, above] {$q$};
                    
                    \draw[main] (3,-1) circle (4pt);
                    \draw[main] (4,-1) circle (4pt);
                    \draw[main] (3.5,-2) circle (4pt);
                    \draw (3,-1) -- (4,-1)  -- (3.5,-2) -- (3,-1);
                    \draw [dashed] (3,-1) to [out=90, in=90] (4,-1);                     
                    \draw (4.25,-1) node{$2$};
                    \draw (3.75,-2) node{$1$};
                    
                    \draw[->] (4.5,0.5)--(5.5,0.5) node[midway, above] {$q$};
                    
                    \draw[main] (6,1) circle (4pt);
                    \draw[main] (7,1) circle (4pt);
                    \draw[main] (6.5,0) circle (4pt);    
                    \draw (6,1) -- (7,1)  -- (6.5,0) -- (6,1);
                    \draw [dashed] (6,1) to [out=90, in=90] (7,1);                     
                    \draw (7.25,1)  node{$2$};
                    \draw (6.75,0) node[rotate=180]{$1$};
                    
                    \draw[->] (4.5,-1.5)--(5.5,-1.5) node[midway, above] {$p$};
                    
                    \draw[main] (6,-1) circle (4pt);
                    \draw[main] (7,-1) circle (4pt);
                    \draw[main] (6.5,-2) circle (4pt);
                    \draw (6,-1) -- (7,-1)  -- (6.5,-2) -- (6,-1);
                    \draw [dashed] (6,-1) to [out=90, in=90] (7,-1);                     
                    \draw (7.25,-1) node[rotate=180]{$2$};
                    \draw (6.75,-2) node{$1$};

            \end{tikzpicture}
            \caption{Permutations and rotations do not commute}
            \label{Fig:NoCommute}
            \end{center}
\end{figure}

To understand the group structure on this set of ordered pairs, first note that the map $\sigma$ of the previous section should factor through this group.  Thus, the kernel $(\Z/m\Z)^n$, consisting of ordered pairs of the form $(\vec{x}, \mathrm{id})$, is a normal subgroup.  By \cite[Theorem~5.12]{DF}, the group is a semidirect product $(\Z/m\Z)^n \rtimes S_n$.  To determine the full group structure, we need to consider the action by conjugation of $S_n$ on this normal subgroup.  To that end, and generalizing the previous example, suppose we have two closed paths $p$ and $q$ in an arbitrary sliding block puzzle.  Suppose further that sliding the tiles along $p$ rotates some of the tiles but does not permute them, and conversely, sliding tiles along $q$ permutes the tiles but does not rotate them.  Consider what happens when we slide the tiles along the composite path $qp\overline{q}$.  We see that the tile located at a vertex $v$ first slides to vertex $\sigma_q (v)$, where it is then rotated by $\vec{x}(p)_{\sigma_q (v)}$, and then moved back to the vertex $v$.  Thus, 
\[
\vec{x}(qp\overline{q})_v = \sigma_q^{-1} \cdot \vec{x}(p) \cdot \sigma_q = \vec{x}(p)_{\sigma_q (v)}.
\]
In other words, $S_n$ acts on $(\Z/m\Z)^n$ by reindexing.

\subsection{Generalized symmetric groups}
This action completely determines the group structure on our semidirect product -- it is the wreath product $(\Z/m\Z ) \wr S_n$, commonly known as the \emph{generalized symmetric group} $S(m,n)$.  The generalized symmetric groups have been studied extensively, dating back at least to Coxeter in 1936 \cite[Section~5]{Coxeter}.  The generalized symmetric group $S(m,n)$ is isomorphic to the group of generalized permutation matrices (that is, matrices in which each row and each column has exactly one nonzero entry), where the nonzero entries are $m$th roots of unity.

The generalized symmetric group has several natural quotients, which we record here.  First, it surjects onto the symmetric group:
\begin{align*}
\pi : S(m,n) \to S_n & \hspace{1 in} \pi (\vec{x},\sigma) = \sigma .
\end{align*}
For any integer $a$ dividing $m$, we have the maps:
\begin{align*}
\rho_{m,a} : S(m,n) \to S(a,n)& \hspace{1 in} \rho_{m,a} (\vec{x},\sigma) = (\vec{x} \Mod{a}, \sigma), \\
\eta_{m,a} : S(m,n) \to \Z/a\Z & \hspace{1 in} \eta_{m,a} (\vec{x},\sigma) = \sum_{i=1}^n x_i \Mod{a} .
\end{align*}
These maps fit into the following commutative diagram.
\[
\xymatrix{
S(m,n) \ar[d]|-{\eta_{m,m}} \ar[rrdd]|-{\eta_{m,a}} \ar[rrd]|-{\rho_{m,a}} \ar@/^1pc/[rrrrd]|-{\pi} & & & & \\
\Z/m\Z \ar[rrd] & & S(a,n) \ar[d]|-{\eta_{a,a}} \ar[rr]|-{\pi} & & S_n \\
& & \Z/a\Z & &  }
\]

In the special case where $m=2$, the generalized symmetric group $S(2,n)$ is also known as the \emph{signed permutation group} or the \emph{hyperoctahedral group}, and is isomorphic to the Coxeter group of type $B_n = C_n$.  The subgroup appearing in the introduction,
\[
\Big\{ (\vec{x},\sigma) \in S(2,n) \mbox{ } \vert \mbox{ } \eta_{2,2} (\vec{x},\sigma) \equiv \mathrm{sign} (\sigma) \Mod{2} \Big\} ,
\]
is isomorphic to the Coxeter group of type $D_n$.

\subsection{Generalizing Wilson's construction}
Returning to our construction, given an oriented edge $e$ in a twist graph $(\Gamma, \gamma)$, we define 
\[
\vec{x}(e) \in \prod_{v \in V(\Gamma)} \Z/m\Z
\]
to be the vector
\begin{displaymath}
\vec{x}(e)_v = \left\{ \begin{array}{ll}
\gamma_e & \textrm{if $v$ is the tail of $e$,} \\
0 & \textrm{otherwise.}
\end{array} \right.
\end{displaymath}
As in Wilson's construction, for a path $p = e_1 \cdots e_k$, we define the ordered pair $(\vec{x}(p),\sigma_p)$ by multiplying the ordered pairs 
\[
(\vec{x}(p),\sigma_p) := (\vec{x}(e_k),\sigma_{e_k}) \cdots (\vec{x}(e_2),\sigma_{e_2}) \cdot (\vec{x}(e_1),\sigma_{e_1})
\]
in the generalized symmetric group $S(m,n)$.  If $(\Gamma,\gamma)$ represents a sliding block puzzle and the path $p$ starts at the empty vertex, then $\vec{x}(p)$ records the rotations of the tiles obtained by sliding the tiles along the path $p$.

Because $\Z/m\Z$ is an $m$-torsion abelian group, the composition of the map from $\pi_1 (\Gamma,v)$ to $S(m,n)$ with the map $\eta_{m,m} : S(m,n) \to \Z/m\Z$ factors through $H_1 (\Gamma,\Z/m\Z)$.  The factorization $H_1 (\Gamma,\Z/m\Z) \to \Z/m\Z$ is the map $\varphi_{\gamma}$ from the introduction.  In other words, the following diagram commutes.
\[
\xymatrix{
\pi_1 (\Gamma,v) \ar[r]^{(\vec{x},\sigma)} \ar[d] & S(m,n) \ar[d]^{\eta_{m,m}} \\
H_1 (\Gamma,\Z/m\Z) \ar[r]^-{\varphi_{\gamma}} & \Z/m\Z }
\]

\section{Proof of Theorems~\ref{Thm:MainThm} and~\ref{Thm:Exceptional}}
\label{Sec:Proof}
The goal of this section is to prove Theorems~\ref{Thm:MainThm} and~\ref{Thm:Exceptional}.  Throughout, we assume that $(\Gamma,\gamma)$ is a 2-vertex connected twist graph with $n+1$ vertices, no loops, and with $\varphi_{\gamma}$ surjective.  

Let $v$ be a vertex in $\Gamma$.  For ease of notation, we write $G$ for the image of $\pi_1 (\Gamma,v)$ in $S(m,n)$ and $H$ for $G \cap \mathrm{ker}(\pi)$.  To classify the possible subgroups $G \subseteq S(m,n)$, we begin by classifying the possible subgroups $H \subseteq (\Z/m\Z)^n$.  To that end, we first identify a useful subset of $H$.  The existence of this subset is essentially the only part of the argument that uses the homomorphism from $\pi_1 (\Gamma,v)$ to $S(m,n)$.

\begin{proposition}
\label{Prop:CyclesInKernel}
There exists a set of generators $\{ a_1 , \ldots , a_g \} \subseteq \Z/m\Z$ and vectors $\vec{x}_1 , \ldots, \vec{x}_g \in H$ such that, for each vertex $w \in V(\Gamma)$, $\vec{x}_{i,w}$ is either 0 or $a_i$.  Moreover, there is at least one vertex $w$ such that $\vec{x}_{i,w} = a_i$, and, if $\Gamma$ is not a cycle (possibly with multi-edges), then there is at least one vertex $w$ such that $\vec{x}_{i,w} = 0$.
\end{proposition}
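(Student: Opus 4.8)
The plan is to produce each $\vec{x}_i$ by traversing a suitable loop enough times that its permutation part cancels, leaving only rotations. By Lemma~\ref{Lem:Generators} applied at $v$ (using that $\Gamma$ is connected and is neither a cycle nor a cycle with parallel edges), I fix simple closed paths $c_1,\dots,c_g$ coming from a spanning tree $T$, freely generating $\pi_1(\Gamma,v)$ and chosen non-Hamiltonian. Let $k_i$ be the length of $c_i$, let $z_i$ be its basepoint, let $\beta_i$ be the path in $T$ from $v$ to $z_i$, and set $g_i := \beta_i\, c_i^{k_i-1}\, \overline{\beta_i} \in \pi_1(\Gamma,v)$. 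Since $\Gamma$ has no loops, $k_i \ge 2$, and since $c_i$ is a simple closed path of length $k_i$ its permutation $\sigma_{c_i}$ is a $(k_i-1)$-cycle, so $\sigma_{c_i}^{k_i-1} = \mathrm{id}$ and hence $\sigma_{g_i} = \sigma_{\beta_i}^{-1}\sigma_{c_i}^{k_i-1}\sigma_{\beta_i} = \mathrm{id}$. Therefore the image of $g_i$ in $S(m,n)$ lies in $H = G \cap \mathrm{ker}(\pi)$; I call it $(\vec{x}_i,\mathrm{id})$ and set $a_i := \varphi_\gamma([c_i]) = \sum_{e \in c_i}\gamma_e$.

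The core step is the computation of $\vec{x}_i$. First, straight from the definition of $\vec{x}$ on paths, $\vec{x}(c_i)$ is supported on (a subset of) the $k_i-1$ vertices of $c_i$ other than $z_i$, and $\sum_w \vec{x}(c_i)_w = \eta_{m,m}(\vec{x}(c_i),\sigma_{c_i}) = a_i$ by the commutative square at the end of Section~\ref{Sec:GenSym}. Expanding the power in the wreath product gives $\vec{x}(c_i^{k_i-1}) = \sum_{j=0}^{k_i-2}\sigma_{c_i}^{j}\cdot\vec{x}(c_i)$. Now $\sigma_{c_i}$ permutes the $k_i-1$ vertices of $c_i$ other than $z_i$ in a single $(k_i-1)$-cycle and fixes every remaining vertex; together with the support statement this forces that sum to equal $\sum_w\vec{x}(c_i)_w = a_i$ at each of those $k_i-1$ vertices and $0$ at every other vertex. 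Finally, writing $B := (\vec{x}(\beta_i),\sigma_{\beta_i})$, one has $(\vec{x}_i,\mathrm{id}) = B^{-1}(\vec{x}(c_i^{k_i-1}),\mathrm{id})B$, and conjugation by $B$ on $\mathrm{ker}(\pi)$ merely relabels coordinates via $\sigma_{\beta_i}$; so $\vec{x}_i$ is $\vec{x}(c_i^{k_i-1})$ with coordinates permuted. Hence every entry of $\vec{x}_i$ equals $0$ or $a_i$, exactly $k_i-1$ of them equal $a_i$, and none of those lies at the basepoint $v$ (which stays empty throughout $g_i$), so all $k_i-1$ of them occur at genuine tile positions.

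The three conclusions then follow. Since $k_i \ge 2$, at least one tile position $w$ has $\vec{x}_{i,w} = a_i$. If $\Gamma$ is not a cycle (possibly with parallel edges), then $c_i$ is non-Hamiltonian, so $k_i \le n$ and at least $n+1-k_i \ge 1$ tile positions $w$ have $\vec{x}_{i,w} = 0$. Lastly, $[c_1],\dots,[c_g]$ form a $\Z$-basis of $H_1(\Gamma,\Z)$, so their reductions generate $H_1(\Gamma,\Z/m\Z)$; since $\varphi_\gamma$ is surjective and $\varphi_\gamma([c_i]) = a_i$, the set $\{a_1,\dots,a_g\}$ generates $\Z/m\Z$.

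The one place where genuine care is needed is the middle paragraph. One must fix the wreath-product conventions precisely — which index the $S_n$-action permutes, so that conjugation by $\beta_i$ really is a coordinate relabelling — and verify that the ``extra'' coordinates of $\vec{x}(c_i^{k_i-1})$, at $z_i$ and at the vertices off $c_i$, genuinely vanish rather than equalling some multiple of a partial sum of twists, so that the entries of $\vec{x}_i$ land in $\{0,a_i\}$ and not in the larger set of partial sums $\sum_{j\in S}\gamma_{e_j}$ along $c_i$. Everything else is formal bookkeeping with Lemma~\ref{Lem:Generators}, the spanning-tree description of $\pi_1$, and the surjectivity of $\varphi_\gamma$.
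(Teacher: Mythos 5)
Your proof is correct and follows essentially the same route as the paper's: take the simple closed generators from Lemma~\ref{Lem:Generators}, raise each to the $(k_i-1)$st power so the $(k_i-1)$-cycle permutation part dies, observe the resulting rotation vector equals $a_i=\varphi_\gamma(c_i)$ on the vertices of the cycle and $0$ elsewhere, and use non-Hamiltonicity and surjectivity of $\varphi_\gamma$ for the remaining claims. The only substantive difference is that you explicitly conjugate by the tree path $\beta_i$ to base each loop at $v$ and check this is a harmless relabelling of coordinates avoiding $v$ — a point the paper's proof leaves implicit.
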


\begin{proof}
Let $p_1 , \ldots , p_g$ be the simple closed paths from Lemma~\ref{Lem:Generators}.  Let
\[
a_i = \varphi_{\gamma} (p_i) = \eta_{m,m} (\vec{x}(p_i),\sigma_{p_i}) = \sum_{w \in V(\Gamma)} \vec{x}(p_i)_w \Mod{m} .
\]
If $w$ is a vertex of $\Gamma$ not contained in $p_i$, then $\vec{x}(p_i)_w = 0$.  Thus, the sum on the right can be taken over vertices in $p_i$.  Since the closed paths $p_i$ generate $\pi_1 (\Gamma,v)$ and $\varphi_{\gamma}$ is surjective, the elements $a_1, \ldots, a_g$ generate $\Z/m\Z$.

Suppose that $p_i$ has length $k_i$.  Then $\sigma_{p_i}$ is a $(k_i-1)$-cycle.  Thus,
\[
(\vec{x}(p_i),\sigma_{p_i})^{k_i-1} = \Big(\sum_{j=0}^{k_i-2} \sigma_{p_i}^j \vec{x}(p_i) \sigma_{p_i}^{-j}, \mathrm{id} \Big) .
\]
Therefore, there exists a vector $\vec{x}_i$ such that $(\vec{x}(p_i),\sigma_{p_i})^{k_i-1} = (\vec{x}_i,\mathrm{id}) \in G \cap \mathrm{ker}(\pi) = H$.  If $w$ is a vertex of $\Gamma$ not contained in $p_i$, then $\vec{x}_{i,w} = 0$.
On the other hand, if $w$ is contained in $p_i$, then
\[
\vec{x}_{i,w} \equiv \sum_{j=1}^{k_i-1} \vec{x}(p_i)_{\sigma_{p_i}^j (w)} \equiv \sum_{u \in p_i} \vec{x}(p_i)_u \equiv a_i \Mod{m}.
\]
Finally, if $\Gamma$ is not a cycle (possibly with multi-edges), then the paths $p_i$ can be chosen to be non-Hamiltonian.  Thus, there exists a vertex $w$ not contained in $p_i$, and the result follows.
\end{proof}

Note that $H \subseteq (\Z/m\Z)^n$ is invariant under the action of $\pi(G)$.  In \cite{Mortimer}, Mortimer studies subspaces of a vector space $k^n$ that are invariant under known 2-transitive subgroups of $S_n$.  If $m$ is prime, we can use Mortimer's result to classify the possibilities for $H$.  More generally, for any integer $a$ dividing $m$, let $G_a = \rho_{m,a} (G)$ and $H_a = \rho_{m,a} (H)$.  We write
\[
\mathcal{C}^{\perp}_{m,a} := \mathrm{ker}(\eta_{m,a}) \cap \mathrm{ker}(\pi) = \Big\{ \vec{x} \in (\Z/m\Z)^n \mbox{ } \vert \mbox{ } \sum_{i=1}^n x_i \equiv 0 \Mod{a} \Big\} .
\]


\begin{proposition}
\label{Prop:KernelModPrime}
If $\Gamma$ is not a cycle (possibly with multi-edges) and $p$ is a prime dividing $m$, then $H_p$ is equal to either $(\Z/p\Z)^n$ or $\mathcal{C}^{\perp}_{p,p}$.
\end{proposition}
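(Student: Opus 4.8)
The plan is to use that, since $p$ is prime, $H_p = \rho_{m,p}(H)$ is an $\mathbb{F}_p$-subspace of $(\Z/p\Z)^n$ which is invariant under a $2$-transitive subgroup of $S_n$, to apply the known classification of such invariant subspaces, and then to eliminate the two smallest candidates using Proposition~\ref{Prop:CyclesInKernel}.

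First I would record the invariance. Since $(\Z/m\Z)^n$ is abelian, the conjugation action of $S(m,n)$ on this normal subgroup factors through $\pi$ and is simply the permutation of coordinates by $S_n$. Applying the homomorphism $\rho_{m,p}$, the group $H_p$ is a normal subgroup of $G_p$, and $G_p$ acts on it through $\pi(G_p)$; by the commutative diagram of Section~\ref{Sec:GenSym}, $\pi(G_p) = \pi(G)$, so $H_p$ is a $\pi(G)$-invariant subspace of $(\Z/p\Z)^n$. Now $\Gamma$ is $2$-vertex connected with no loops and is not a cycle, so Theorem~\ref{Thm:Wilson} applies (using the remark after it to pass to the underlying simple graph, and noting that $\Gamma = \Theta_5$ and $\Gamma = \Theta_7$ are permitted here), and hence $\pi(G)$ is one of $A_n$, $\mathrm{PGL}(2,5)$ acting on $6$ points, or $S_n$. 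In the bipartite case $n \geq 4$, since the smallest $2$-connected bipartite non-cycle has $5$ vertices; thus in every case $\pi(G)$ is $2$-transitive.

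Next I would invoke Mortimer's classification \cite{Mortimer} of the $\mathbb{F}_p$-subspaces of $\mathbb{F}_p^n$ invariant under a $2$-transitive subgroup of $S_n$. Since none of $A_n$ $(n \geq 4)$, $\mathrm{PGL}(2,5)$, or $S_n$ appears among Mortimer's exceptional groups, the only $\pi(G)$-invariant subspaces of $(\Z/p\Z)^n$ are the four members of the lattice $\{0,\ L,\ \mathcal{C}^{\perp}_{p,p},\ (\Z/p\Z)^n\}$, where $L$ is the line spanned by $(1,1,\dots,1)$ and $\mathcal{C}^{\perp}_{p,p}$ is the hyperplane of sum-zero vectors. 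To finish, I would rule out $0$ and $L$: reducing the vectors $\vec{x}_1,\dots,\vec{x}_g \in H$ of Proposition~\ref{Prop:CyclesInKernel} modulo $p$ yields vectors in $H_p$, and since $\{a_1,\dots,a_g\}$ generates $\Z/m\Z$ while $p \mid m$, some $a_i$ is nonzero modulo $p$. For that $i$, the reduction of $\vec{x}_i$ has a coordinate equal to $a_i \not\equiv 0 \Mod{p}$, so $H_p \neq 0$; and, because $\Gamma$ is not a cycle, Proposition~\ref{Prop:CyclesInKernel} also gives a coordinate of $\vec{x}_i$ equal to $0$, so this reduction is not a scalar multiple of $(1,\dots,1)$ and $H_p \not\subseteq L$. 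Therefore $H_p = \mathcal{C}^{\perp}_{p,p}$ or $H_p = (\Z/p\Z)^n$.

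The step I expect to be the main obstacle is confirming that Mortimer's list of exceptions excludes $A_n$, $S_n$, and the sharply $3$-transitive group $\mathrm{PGL}(2,5)$ on $6$ points. The delicate instances are the modular ones, i.e. pairs $(\Gamma,p)$ with $p \mid n$ — in particular $\Gamma = \Theta_5$ with $p = 2$ and $\Gamma = \Theta_7$ with $p = 2$ — where the permutation module over $\mathbb{F}_p$ need not be semisimple, and one must check directly that its lattice of submodules is nevertheless exactly $\{0,\ L,\ \mathcal{C}^{\perp}_{p,p},\ (\Z/p\Z)^n\}$, with no extra submodule strictly between $L$ and $\mathcal{C}^{\perp}_{p,p}$.
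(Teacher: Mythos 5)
Your proposal is correct and follows essentially the same route as the paper: establish that $H_p$ is a $\pi(G)$-invariant subspace of $(\Z/p\Z)^n$, invoke Mortimer's classification (the paper cites his Table~1 for simplicity of the heart together with his Lemma~2, which is exactly how the paper disposes of the modular concern you flag at the end), and then use the vectors of Proposition~\ref{Prop:CyclesInKernel} to exclude $0$ and the constant vectors. The only cosmetic difference is that you spell out the invariance and $2$-transitivity checks that the paper leaves implicit.
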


\begin{proof}
The group $H_p$ is invariant under the action of $\pi (G)$, and by Theorem~\ref{Thm:Wilson}, $\pi (G)$ is either $S_n$ $(n \geq 3)$, $A_n$ $(n \geq 4)$, or $\mathrm{PGL}(2,5)$ $(n=6)$.  By \cite[Table~1]{Mortimer}, the heart over $\Z/p\Z$ of $\pi (G)$ acting on $(\Z/p\Z)^n$ is simple\footnote{Indeed, the heart over any field $k$ of $\pi (G)$ is simple, except for the special case where $\pi (G)=A_4$.  The heart of $A_4$ over $k$ is reducible if $k$ contains the field $\mathbb{F}_4$, but this does not occur in our case.}.  Note also that a $(\Z/p\Z)$-subspace of $(\Z/p\Z)^n$ is the same thing as a subgroup of $(\Z/p\Z)^n$.  Thus, by \cite[Lemma~2]{Mortimer}, the only $\pi (G)$-invariant subgroups of $(\Z/p\Z)^n$ are $0$, the group of constant vectors $\mathcal{C}_{p,p}$, $\mathcal{C}^{\perp}_{p,p}$, and $(\Z/p\Z)^n$.

We now prove that $H_p$ cannot be 0 or $\mathcal{C}_{p,p}$.  By Proposition~\ref{Prop:CyclesInKernel}, there exists a set of generators $\{ a_1 , \ldots , a_g \} \subseteq \Z/m\Z$ and vectors $\vec{x}_1 , \ldots ,  \vec{x}_g \in H$ such that, for each vertex $w \in V(\Gamma)$, $\vec{x}_{i,w}$ is either 0 or $a_i$.  Moreover, if $a_i \neq 0$, then the vector $\vec{x}_i$ is non-constant.  Since the set $\{ a_1 , \ldots , a_g \}$ generates $\Z/m\Z$, there exists an $i$ such that $a_i$ is not divisible by $p$.  Thus, $\rho_{m,p}(\vec{x}_i )$ is non-constant, and it follows that $H_p$ cannot be $0$ or $\mathcal{C}_{p,p}$. 
\end{proof}

In addition, the only primes $p$ for which $H_p$ may be equal to $\mathcal{C}^{\perp}_{p,p}$ are $p=2$ and 3, and these two cases are mutually exclusive.

\begin{lemma}
\label{Lem:PIs2}
Let $a>1$ be an integer dividing $m$.  If $H = \mathcal{C}^{\perp}_{m,a}$, then either $a=2$ and $\pi (G)$ is $S_n$ or $\mathrm{PGL}(2,5)$, or $a=3$ and $\pi(G)$ is $A_4$.
\end{lemma}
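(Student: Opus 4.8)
The plan is to produce a surjective homomorphism $\pi(G) \twoheadrightarrow \Z/a\Z$, which will force $\Z/a\Z$ to be a quotient of the abelianization $\pi(G)^{\mathrm{ab}}$, and then to read off the conclusion from Wilson's list of possibilities for $\pi(G)$ in Theorem~\ref{Thm:Wilson}.

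The key step is to build this surjection. By the commutative diagram at the end of Section~\ref{Sec:GenSym}, $\eta_{m,a}$ is the composition of $\eta_{m,m} \colon S(m,n) \to \Z/m\Z$ with the reduction map $\Z/m\Z \to \Z/a\Z$; in particular $\eta_{m,a}$ is a group homomorphism. That same diagram, together with the surjectivity of $\varphi_{\gamma}$, shows that $\eta_{m,m}$ restricts to a surjection of $G$ onto $\Z/m\Z$, and hence $\eta_{m,a}$ restricts to a surjection of $G$ onto $\Z/a\Z$. On the other hand, by definition $H = \mathcal{C}^{\perp}_{m,a} = \mathrm{ker}(\eta_{m,a}) \cap \mathrm{ker}(\pi)$ is contained in $\mathrm{ker}(\eta_{m,a})$, so $\eta_{m,a}|_G$ vanishes on $H$ and therefore descends to a surjective homomorphism $\overline{\eta} \colon \pi(G) = G/H \twoheadrightarrow \Z/a\Z$. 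Since $a>1$, this exhibits $\Z/a\Z$ as a nontrivial abelian quotient of $\pi(G)$.

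Next I would identify $\pi(G)$. To apply Theorem~\ref{Thm:Wilson} I need $\Gamma$ to be $2$-vertex connected (a standing assumption) and not a cycle; for the latter, if $\Gamma$ were a cycle then $\pi_1(\Gamma,v) \cong \Z$ and hence $H$ would be cyclic, whereas $\mathcal{C}^{\perp}_{m,a}$ surjects onto $(\Z/m\Z)^{n-1}$ by forgetting the last coordinate (onto because $a \mid m$) and so is non-cyclic as soon as $n \geq 3$, while the only $2$-connected graph with $n \leq 2$ is $C_3$, for which $\pi(G) \cong S_2$ and $\overline{\eta}$ already forces $a=2$, in agreement with the statement. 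So Theorem~\ref{Thm:Wilson} gives that $\pi(G)$ is $S_n$, $A_n$, or $\mathrm{PGL}(2,5)$. Now I would run through abelianizations: $S_n^{\mathrm{ab}} \cong \Z/2\Z$ for $n \geq 2$, and $\mathrm{PGL}(2,5) \cong S_5$ likewise has abelianization $\Z/2\Z$, so in those cases $a \mid 2$ and hence $a=2$; whereas if $\pi(G) = A_n$ then Wilson's classification forces $\Gamma$ to be bipartite, hence $n \geq 4$ (the smallest $2$-connected bipartite non-cycle is $K_{2,3}$), and since $A_n$ is simple and nonabelian for $n \geq 5$ we must have $n=4$, with $\pi(G)^{\mathrm{ab}} \cong A_4/K \cong \Z/3\Z$, so $a \mid 3$ and hence $a=3$. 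These are precisely the two alternatives in the statement, and they cannot both occur, since $\pi(G)$ is a single group that cannot equal $A_4$ and simultaneously equal $S_n$ or $\mathrm{PGL}(2,5)$.

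I expect the main obstacle to be nothing more than the bookkeeping in the second paragraph: one has to notice that $\eta_{m,a}$ is a homomorphism, that it is surjective on $G$ (both consequences of the $\varphi_{\gamma}$ diagram and the surjectivity hypothesis), and that it annihilates $H$ simply because of how $\mathcal{C}^{\perp}_{m,a}$ is defined, so that $\overline{\eta}$ is well defined and onto. Everything after that — invoking Wilson's theorem and computing the abelianizations of $S_n$, $A_4$, and $\mathrm{PGL}(2,5)$ — is routine.
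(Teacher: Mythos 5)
Your proposal is correct and follows essentially the same route as the paper: both proofs observe that $\eta_{m,a}$ is surjective on $G$ (via $\varphi_{\gamma}$), kills $H=\mathcal{C}^{\perp}_{m,a}$, and hence descends to a surjection $\pi(G)\twoheadrightarrow \Z/a\Z$, after which one checks which of Wilson's groups admit such a quotient. The only difference is cosmetic: you cite the abelianizations of $S_n$, $\mathrm{PGL}(2,5)\cong S_5$, and $A_4$, while the paper derives them on the spot from transpositions and the normal subgroup structure of $A_4$.
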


\begin{proof}
By assumption, the map $\eta_{m,a}$ is surjective.  If $H = \mathcal{C}^{\perp}_{m,a}$, then the restriction of $\eta_{m,a}$ to $H$ is identically zero, and thus $\eta_{m,a}$ factors through the quotient $G/H \cong \pi (G)$.  If $\pi (G)$ is $A_n$ for $n \geq 5$, then since $A_n$ is simple, the map $A_n \to \Z/a\Z$ is trivial, contradicting the fact that $\eta_{m,a}$ is surjective.

If $\pi (G) = A_4$, then since all the subgroups of order 3 in $A_4$ are conjugate, no proper normal subgroup can contain an element of order 3.  It follows that the only non-trivial proper normal subgroup of $A_4$ is the Klein 4-group $K$, and the surjective map $A_4 \to \Z/a\Z$ must be the quotient $q : A_4 \to A_4/K \cong \Z/3\Z$.

Similarly, if $\pi (G)$ is $S_n$ or $\mathrm{PGL}(2,5) \cong S_5$, then $a$ must be 2 and the map $S_n \to \Z/a\Z$ must be the sign homomorphism.  To see this, note that every transposition in $S_n$ must map to either the identity or an element of order 2.  Since the transpositions generate $S_n$, it follows that every element of $S_n$ must map to either the identity or an element of order 2.  Since $\Z/a\Z$ contains at most one element of order 2, the surjectivity of $\eta_{m,a}$ implies that $a=2$.  Finally, since the transpositions are all conjugate in $S_n$ and $\Z/2\Z$ is abelian, we see that all the transpositions must map to the same element of $\Z/2\Z$.  It follows that all odd permutations must map to 1 and all even permutations to 0.
\end{proof}

We now classify the possible subgroups $H$.

\begin{proposition}
\label{Prop:KernelClassification}
Let $a$ divide $m$.  If $\Gamma$ is not a cycle (possibly with multi-edges), then either $H_a = (\Z/a\Z)^n$, or $H_a = \mathcal{C}^{\perp}_{a,2}$ and $\pi(G)$ is $S_n$ or $\mathrm{PGL}(2,5)$, or $H_a = \mathcal{C}^{\perp}_{a,3}$ and $\pi(G)$ is $A_4$.
\end{proposition}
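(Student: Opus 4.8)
The plan is to bootstrap from the prime case (Proposition~\ref{Prop:KernelModPrime}) to arbitrary divisors $a$ of $m$ by combining information across all primes dividing $a$, using the Chinese Remainder Theorem together with the $\pi(G)$-invariance of $H_a$. First I would fix $a \mid m$ and observe that $H_a = \rho_{m,a}(H) \subseteq (\mathbb{Z}/a\mathbb{Z})^n$ is a subgroup invariant under the action of $\pi(G_a) = \pi(G)$ (the reduction maps commute with $\pi$, as recorded in the commutative diagram in Section~\ref{Sec:GenSym}). For each prime $p \mid a$, reduction mod $p$ carries $H_a$ onto $H_p$, which by Proposition~\ref{Prop:KernelModPrime} is either all of $(\mathbb{Z}/p\mathbb{Z})^n$ or $\mathcal{C}^{\perp}_{p,p}$; and by Lemma~\ref{Lem:PIs2}, the latter case forces $p \in \{2,3\}$ with the stated restriction on $\pi(G)$, and these two cases are mutually exclusive. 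So at most one prime $p_0 \mid a$ can have $H_{p_0} = \mathcal{C}^{\perp}_{p_0,p_0}$; at every other prime $p \mid a$ we have $H_p = (\mathbb{Z}/p\mathbb{Z})^n$.

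The key step is then to promote "surjective mod each prime power" to "surjective", i.e. to show that if $H_a$ surjects onto $(\mathbb{Z}/q\mathbb{Z})^n$ for every prime power $q \mid a$ (treating the exceptional prime $p_0$ as contributing $\mathcal{C}^{\perp}$), then $H_a$ is exactly $(\mathbb{Z}/a\mathbb{Z})^n$ or $\mathcal{C}^{\perp}_{a,p_0}$ accordingly. For this I would first handle the prime-power case: if $a = p^k$, I claim $H_{p^k}$ is determined by $H_p$. The inclusion $H_{p^k} \subseteq \mathcal{C}^{\perp}_{p^k, p}$ when $H_p = \mathcal{C}^{\perp}_{p,p}$ is immediate since $\eta$ commutes with reduction; the reverse inclusion, and the statement $H_{p^k} = (\mathbb{Z}/p^k\mathbb{Z})^n$ when $H_p$ is everything, should follow from Proposition~\ref{Prop:CyclesInKernel}: the vectors $\vec{x}_i \in H$ have all entries in $\{0, a_i\}$ with the $a_i$ generating $\mathbb{Z}/m\mathbb{Z}$, so some $a_i$ is a unit mod $p$, hence a unit mod $p^k$; scaling $\vec{x}_i$ by units and taking $\mathbb{Z}$-linear combinations inside the $\pi(G)$-invariant subgroup $H_{p^k}$, together with the $2$-transitivity of $\pi(G)$ which lets us move the "support" of $\vec{x}_i$ around, generates the claimed group. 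Then for general $a$, factor $a = \prod p_j^{k_j}$ and apply CRT: $(\mathbb{Z}/a\mathbb{Z})^n \cong \prod_j (\mathbb{Z}/p_j^{k_j}\mathbb{Z})^n$, and a $\pi(G)$-invariant subgroup $H_a$ that surjects onto each factor and whose $j$-th projection is known must be the product of those projections — here one uses that the $p_j^{k_j}$ are coprime so there is no "diagonal" obstruction, plus Goursat/CRT-type reasoning that a subgroup of a product of coprime-order groups surjecting onto each factor is the full product.

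I expect the main obstacle to be the prime-power step, specifically ruling out "intermediate" $\pi(G)$-invariant subgroups of $(\mathbb{Z}/p^k\mathbb{Z})^n$ that are not among $0$, $\mathcal{C}_{p^k,p^k}$-type subgroups, $\mathcal{C}^{\perp}_{p^k,\cdot}$, and the whole group — Mortimer's classification (\cite[Lemma~2]{Mortimer}) is stated over a field, so it does not directly apply to $\mathbb{Z}/p^k\mathbb{Z}$ for $k \geq 2$. The way around this is to avoid classifying all invariant subgroups and instead argue directly: use Proposition~\ref{Prop:CyclesInKernel} to exhibit an explicit vector in $H$ with a unit entry mod $p^k$ and at least one zero entry (valid since $\Gamma$ is not a cycle), then use $2$-transitivity of $\pi(G)$ to generate, from this single vector, enough vectors to span either $\mathcal{C}^{\perp}_{p^k,p^k}$ or all of $(\mathbb{Z}/p^k\mathbb{Z})^n$ — the dichotomy being whether $H_p = \mathcal{C}^{\perp}_{p,p}$ or $(\mathbb{Z}/p\mathbb{Z})^n$, which is already settled. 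Concretely, from a vector with entries in $\{0,u\}$ ($u$ a unit) and both values occurring, $2$-transitivity produces all vectors $u(\mathbf{e}_j - \mathbf{e}_k)$, which generate $\mathcal{C}^{\perp}_{p^k,p^k}$; if additionally $H_p$ is everything mod $p$, a dimension/index count forces $H_{p^k} = (\mathbb{Z}/p^k\mathbb{Z})^n$. This keeps the argument self-contained and sidesteps the lack of a ring-theoretic analogue of Mortimer's lemma.
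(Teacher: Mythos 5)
Your overall architecture (settle the prime-power case, then reassemble by CRT using coprimality of the factors) matches the paper's induction, but your treatment of the prime-power step takes a genuinely different route — and it has a gap at exactly the point where the paper has to work hardest. The part of your sketch that works: taking $\vec{x}_i \in H$ from Proposition~\ref{Prop:CyclesInKernel} with entries in $\{0,a_i\}$, $a_i$ a unit mod $p$, and both values occurring (the latter because $\Gamma$ is not a cycle), and differencing it against its $\pi(G)$-translates does yield the vectors $a_i(\mathbf{e}_j-\mathbf{e}_k)$ and hence $\mathcal{C}^{\perp}_{p^k,p^k} \subseteq H_{p^k}$ — though you need a short case check (not bare $2$-transitivity) that some element of $\pi(G)$ carries the support set to a set differing from it in exactly one element, especially for $\mathrm{PGL}(2,5)$ and $A_4$. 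When $H_p = (\Z/p\Z)^n$ your index count then correctly gives $H_{p^k} = (\Z/p^k\Z)^n$. The gap is in the exceptional case $H_p = \mathcal{C}^{\perp}_{p,p}$: there you only obtain the sandwich $\mathcal{C}^{\perp}_{p^k,p^k} \subseteq H_{p^k} \subseteq \mathcal{C}^{\perp}_{p^k,p}$, and for $k \geq 2$ this leaves $k$ candidates $\mathcal{C}^{\perp}_{p^k,p^j}$ with $1 \leq j \leq k$; the proposition asserts $j=1$, and nothing in your sketch excludes $j \geq 2$. Ruling that out is precisely the content of the longest part of the paper's proof (the sub-case $a=p^2$ with $H_p = \mathcal{C}^{\perp}_{p,p}$, which the paper kills by averaging an explicit vector over a $k$-cycle when $p=2$, or over the Klein four-group when $p=3$, to produce an element of $H_a$ lying in the image of $\mu$ but not in $\mu(\mathcal{C}^{\perp}_{p,p})$).

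The gap is closable inside your framework, and arguably more cleanly than in the paper: the coordinate-sum map identifies $\mathcal{C}^{\perp}_{p^k,p}/\mathcal{C}^{\perp}_{p^k,p^k}$ with the cyclic group $p\Z/p^k\Z$, so your lower bound already forces $H_{p^k} = \mathcal{C}^{\perp}_{p^k,p^j}$ for some $j$; then the argument of Lemma~\ref{Lem:PIs2} (whose proof only uses $H \subseteq \ker(\eta_{m,a})$) applies with $a = p^j$, and surjectivity of $\eta_{m,p^j}$ on $G$ together with the fact that $S_n$, $\mathrm{PGL}(2,5)$, and $A_4$ admit no surjection onto $\Z/4\Z$ or $\Z/9\Z$ forces $j=1$. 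This would let you avoid the paper's analysis of $\mu^{-1}(H_a)$ via Mortimer's lemma inside the short exact sequence $0 \to (\Z/p\Z)^n \to (\Z/p^{\alpha}\Z)^n \to (\Z/p^{\alpha-1}\Z)^n \to 0$ altogether. As written, however, the proposal does not establish the proposition.
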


\begin{proof}
We will prove this by induction on the number of prime factors of $a$.  The base case, where $a=p$ is prime, follows from Proposition~\ref{Prop:KernelModPrime} and Lemma~\ref{Lem:PIs2}.

We break the inductive step into two cases.  First, consider the case where $a=a_1 a_2$, where $a_1, a_2 > 1$ are relatively prime.  If $\pi(G) = S_n$ or $\pi(G) = \mathrm{PGL}(2,5)$, assume without loss of generality that $a_2$ is odd.  Similarly, if $\pi(G) = A_4$, assume that $a_2$ is not divisible by 3.  By the Chinese Remainder Theorem, $(\Z/a\Z)^n \cong (\Z/a_1 \Z)^n \times (\Z/a_2 \Z)^n$.  By induction, $H_{a_2} = (\Z/a_2\Z)^n$, hence $H_a = \rho_{a,a_1}^{-1} (H_{a_1})$.  By induction, $H_{a_1}$ is either $(\Z/a_1\Z)^n$ or $\mathcal{C}^{\perp}_{a_1,p}$ for $p=2$ or 3.  If $H_{a_1} = (\Z/a_1\Z)^n$, we see that $H_a = (\Z/a\Z)^n$, and if $H_{a_1} = \mathcal{C}^{\perp}_{a_1,p}$, we see that $H_a = \mathcal{C}^{\perp}_{a,p}$.

Second, consider the case where $a=p^{\alpha}$ is a prime power, with $\alpha > 1$.  Consider the short exact sequence
\[
\xymatrix{
0 \ar[r] & (\Z/p\Z)^n \ar[r]^{\mu} & (\Z/p^{\alpha}\Z)^n \ar[r]^{r} & (\Z/p^{\alpha-1}\Z)^n \ar[r] & 0, }
\]
where the map $\mu$ is multiplication by $p^{\alpha-1}$ and the map $r$ is reduction $\Mod{p^{\alpha-1}}$.  By induction, $r(H_a) = H_{p^{\alpha-1}}$ is either $(\Z/p^{\alpha-1}\Z)^n$ or $\mathcal{C}^{\perp}_{p^{\alpha-1},p}$.  Moreover, $\mu^{-1} (H_a)$ is invariant under the action of $\pi (G)$, so by \cite[Lemma~2]{Mortimer} it is either $0$, $\mathcal{C}_{p,p}$, $\mathcal{C}^{\perp}_{p,p}$, or $(\Z/a\Z)^n$.  We consider each of these four possibilities in turn.

If $\mu^{-1}(H_a) = (\Z/a\Z)^n$, then $H_a = r^{-1} (H_{p^{\alpha-1}})$.  If $H_{p^{\alpha-1}} = (\Z/p^{\alpha-1}\Z)^n$, then $H_a = (\Z/a\Z)^n$, and if $H_{p^{\alpha-1}} = \mathcal{C}^{\perp}_{p^{\alpha-1},p}$, then $H_a = \mathcal{C}^{\perp}_{p^{\alpha},p}$.

Now, suppose that $\mu^{-1} (H_a) = \mathcal{C}_{p,p}$, and let $\vec{x} \in H_a$ be an element such that $r(\vec{x}) = (p^{\alpha-2}, (p-1)p^{\alpha-2}, 0, 0, \ldots , 0)$.  Then $p\vec{x}$ is in the image of $\mu$, but not in $\mu(\mathcal{C}_{p,p})$, a contradiction.
 
Similarly, if $\mu^{-1}(H_a) = 0$, then since $r(H_a)$ is nonzero, there exists a nonzero $\vec{x} \in H_a$.  Consider the smallest power $\beta$ such that $p^{\beta}\vec{x}$ is in the image of $\mu$.  Note that such a $\beta$ must exist, since $p^{\alpha-1} \vec{x}$ is in the image of $\mu$ for all $\vec{x}$.  Since $\vec{x} \neq 0$, we see that $p^{\beta}\vec{x} \neq 0$, a contradiction.

Finally, suppose that $\mu^{-1} (H_a) = \mathcal{C}^{\perp}_{p,p}$.  If either $\alpha \geq 3$ or $H_{p^{\alpha-1}} = (\Z/p^{\alpha-1}\Z)^n$, then there exists a vector $\vec{x} \in H_a$ such that $r(\vec{x}) = (p^{\alpha-2},0,0,\ldots,0)$.  Then $p\vec{x}$ is in the image of $\mu$, but not in $\mu(\mathcal{C}^{\perp}_{p,p})$, a contradiction.  It follows that $a=p^2$ and $H_p = \mathcal{C}^{\perp}_{p,p}$.  We now show that this is impossible.

Consider the case where $p=2$ and $\pi(G) = S_n$ or $\mathrm{PGL}(2,5)$.  We will show that $H_4 \subseteq \mathcal{C}^{\perp}_{4,4}$, contradicting Lemma~\ref{Lem:PIs2}.  Assume for contradiction that there exists $\vec{x} \in H_4$ with $\eta_{4,4} (\vec{x}) \not\equiv 0 \Mod{4}$.  Since $H_2 = \mathcal{C}^{\perp}_{2,2}$, we see that $\eta_{4,4} (\vec{x}) \equiv 2 \Mod{4}$.  Let 
\[
\mathcal{O} = \{ w \in V(\Gamma) \smallsetminus \{ v \} \mbox{ } \vert \mbox{ } \vec{x}_w \text{ is odd} \} . 
\]
We reduce to the case where $\mathcal{O}$ is strictly contained in $V(\Gamma) \smallsetminus \{ v \}$.  Since the set $\{ a_1 , \ldots , a_g \}$ generates $\Z/2\Z$, there exists an $i$ such that $a_i$ is odd.  If $\eta_{m,4} (\vec{x}_i) \not\equiv 0 \Mod{4}$, then we let $\vec{x}$ be $\rho_{m,4} (\vec{x}_i)$.  If $\mathcal{O} = V(\Gamma) \smallsetminus \{ v \}$ and $\eta_{m,4} (\vec{x}_i) \equiv 0 \Mod{4}$, then replace $\vec{x}$ with $\vec{x} + \rho_{m,4} (\vec{x}_i)$.
  
Now, let $A \subset V(\Gamma) \smallsetminus \{ v \}$ be a subset containing $\mathcal{O}$ of size $\vert A \vert = k$, with $k$ odd.  (If $\pi (G) = \mathrm{PGL}(2,5)$, choose $k=5$.)  Let $\sigma \in \pi (G)$ be a $k$-cycle that transitively permutes the elements of $A$.  (In the case where $\pi (G) = \mathrm{PGL}(2,5)$, there is a 5-cycle in $\mathrm{PGL}(2,5)$ fixing any given element of $\mathbb{P}^1 (\mathbb{F}_5)$.)  Since $H_4$ is $\pi(G)$-invariant, it contains the vector $\vec{z} = \sum_{i=0}^{k-1} \sigma^i \cdot \vec{x}$.  Note that
\begin{displaymath}
\vec{z}_w = \left\{ \begin{array}{ll}
\sum_{u \in A} \vec{x}_u & \textrm{if $w \in A$,} \\
k \vec{x}_w & \textrm{if $w \notin A$.}
\end{array} \right.
\end{displaymath}
Since $\sum_{u \in A} \vec{x}_u$ is even, $\vec{z}$ is in the image of $\mu$.  On the other hand, we have
\[
\eta_{4,4} (\vec{z}) \equiv  k\Big( \sum_{u \in A} \vec{x}_u + \sum_{u \notin A} \vec{x}_u \Big) \equiv k\eta_{4,4} (\vec{x}) \equiv 2 \Mod{4},
\]
where the last equality holds because $k$ is odd.  Thus, $\vec{z}$ is in the image of $\mu$, but not in $\mu(\mathcal{C}^{\perp}_{2,2})$, a contradiction.

Similarly, consider the case where $p=3$ and $\pi(G) = A_4$.  As above, there exists an $i$ such that $a_i$ is not divisible by 3.  The vector $\rho_{m,9} (\vec{x}_i)$ must have one entry equal to 0 and three entries equal to $a_i \Mod{3}$.  Let $K \subset A_4$ be the Klein 4-group.  Since $H_9$ is $A_4$-invariant, it contains the vector $\vec{z} = \sum_{\sigma \in K} \sigma \cdot \rho_{m,9} (\vec{x}_i) = (3a_i , 3a_i , 3a_i , 3a_i)$.  Thus, $\vec{z}$ is in the image of $\mu$, but not in $\mu(\mathcal{C}^{\perp}_{3,3})$, a contradiction. 
\end{proof}

We now prove the main theorem.

\begin{proof}[Proof of Theorem~\ref{Thm:MainThm}]
If $\Gamma$ is bipartite, then by Theorem~\ref{Thm:Wilson}, $\pi(G) = A_n$.  The only simple 2-vertex connected bipartite graphs on 5 or fewer vertices are the 4-cycle and the graph $\Theta_5$.  By assumption, therefore, $n \geq 5$.  By Proposition~\ref{Prop:KernelClassification}, $H = (\Z/m\Z)^n$.  It follows that
\[
G = \pi^{-1} (A_n) = \Big\{ (\vec{x},\sigma) \in S(m,n) \mbox{ } \vert \mbox{ } \sigma \in A_n \Big\} .
\]

If $\Gamma$ is not bipartite, then by Theorem~\ref{Thm:Wilson}, $\pi(G) = S_n$ with $n \geq 3$.  By Proposition~\ref{Prop:KernelClassification}, either $H = (\Z/m\Z)^n$ or $m$ is even and $H = \mathcal{C}^{\perp}_{m,2}$.  If $H = (\Z/m\Z)^n$, then
\[
G = \pi^{-1} (S_n) = S(m,n) .
\]
If $H = \mathcal{C}^{\perp}_{m,2}$, then as in the proof of Lemma~\ref{Lem:PIs2}, the map $\eta_{m,2} : G \to \Z/2\Z$ factors through the quotient $G/H \cong S_n$, and the map $S_n \to \Z/2\Z$ is the sign homomorphism.  It follows that
\[
G = \Big\{ (\vec{x},\sigma) \in S(m,n) \mbox{ } \vert \mbox{ } \eta_{m,2} (\vec{x},\sigma) \equiv \mathrm{sign} (\sigma) \Mod{2} \Big\} .
\]

It remains to classify which non-bipartite twist graphs correspond to each of the two subgroups.  If $p$ is a simple closed path, then $\eta_{m,2} (\vec{x(p)},\sigma_p) \equiv \mathrm{sign} (\sigma_p) \Mod{2}$ if and only if $p$ has an even number of edges $e$ with $\gamma_e$ even.  Since $\pi_1 (\Gamma,v)$ is generated by simple closed paths, it follows that $G \neq S(m,n)$ if and only if $(\Gamma,\gamma)$ is twist bipartite.
\end{proof}

The exceptional cases are very similar.

\begin{proof}[Proof of Theorem~\ref{Thm:Exceptional}]
If $\Gamma = \Theta_7$, the argument is nearly identical to the non-bipartite case above.  By Theorem~\ref{Thm:Wilson}, $\pi(G) =  \mathrm{PGL}(2,5)$, and by Proposition~\ref{Prop:KernelClassification}, either $H = (\Z/m\Z)^6$ or $m$ is even and $H = \mathcal{C}^{\perp}_{m,2}$.  If $H = (\Z/m\Z)^6$, then $G = \pi^{-1} (\mathrm{PGL}(2,5))$, and if $H = \mathcal{C}^{\perp}_{m,2}$, then as in the proof of Lemma~\ref{Lem:PIs2}, the map $\eta_{m,2} : G \to \Z/2\Z$ factors through the quotient $G/H \cong \mathrm{PGL}(2,5) \cong S_5$, and the map $S_5 \to \Z/2\Z$ is the sign homomorphism.

If $\Gamma = \Theta_5$, then by Proposition~\ref{Prop:KernelClassification}, either $H = (\Z/m\Z)^4$ or $m$ is divisible by 3 and $H = \mathcal{C}^{\perp}_{m,3}$.  If $H = (\Z/m\Z)^4$, then again 
\[
G = \pi^{-1} (A_4) = \Big\{ (\vec{x},\sigma) \in S(m,4) \mbox{ } \vert \mbox{ } \sigma \in A_4 \Big\} .
\]
If $H = \mathcal{C}^{\perp}_{m,3}$, then as in the proof of Lemma~\ref{Lem:PIs2}, the map $\eta_{m,3} : G \to \Z/3\Z$ factors through the quotient $G/H \cong A_4$, and the map $A_4 \to \Z/3\Z$ is the quotient of $A_4$ by the Klein 4-group $K$.  It follows that
\[
G = \Big\{ (\vec{x},\sigma) \in S(m,4) \mbox{ } \vert \mbox{ } \sigma \in A_4 , \eta_{m,3} (\vec{x},\sigma) \equiv q(\sigma) \Mod{3} \Big\}.
\]

It remains to classify which 1-chains $\gamma$ on $\Theta_4$ correspond to each of these two subgroups.  The fundamental group of $\Theta_4$ is generated by the two simple closed paths $p = e_1 e_2 \overline{e}_6 \overline{e}_5$ and $q = e_3 e_4 \overline{e}_6 \overline{e}_5$.  Because these two paths generate $\pi_1 (\Gamma, v)$, $G$ is contained in a subgroup if and only if $(\vec{x}(p),\sigma_p)$ and $(\vec{x}(q),\sigma_q)$ are contained in that subgroup.  It follows that $G \neq \pi^{-1} (A_4)$ if and only if the two classes 
\[
(\gamma_{e_1} + \gamma_{e_2}) - (\gamma_{e_5} + \gamma_{e_6}), \mbox{ } (\gamma_{e_3} + \gamma_{e_4}) - (\gamma_{e_5} + \gamma_{e_6})
\]
are nonzero and distinct $\Mod{3}$.  Equivalently, $G \neq \pi^{-1} (A_4)$ if and only if 
\[
(\gamma_{e_1} + \gamma_{e_2}),  \mbox{ } (\gamma_{e_3} + \gamma_{e_4}),  \mbox{ and } (\gamma_{e_5} + \gamma_{e_6})
\]
are distinct $\Mod{3}$.

Finally, note that for any three integers $x$, $y$, and $z$, we have $x+y+z \equiv 0 \Mod{3}$ if and only if either $x$, $y$, and $z$ are all equivalent $\Mod{3}$, or distinct $\Mod{3}$.  But if
\[
(\gamma_{e_1} + \gamma_{e_2}) \equiv (\gamma_{e_3} + \gamma_{e_4}) \equiv (\gamma_{e_5} + \gamma_{e_6}) \Mod{3},
\]
then $\varphi_{\gamma}$ is not surjective.  It follows that $G \neq \pi^{-1} (A_4)$ if and only if $\sum_{i=1}^6 \gamma_{e_i} \equiv 0 \Mod{3}$.
\end{proof}

\section*{Acknowledgments}
We would like to thank Henry Segerman for comments on an early draft of this article, and for providing us with high quality photos.  This research was conducted as a project with the University of Kentucky Math Lab, supported by NSF DMS-2054135.



\end{document}